\newcommand\ex{\ensuremath{\mathrm{ex}}}
\newcommand\cN{{\mathcal N}}
\theoremstyle{plain}
\newtheorem{theorem}{Theorem}%[section]
\newtheorem{lemma}[theorem]{Lemma}
\newtheorem{conjecture}[theorem]{Conjecture}
\newtheorem{proposition}[theorem]{Proposition}
\theoremstyle{definition}
\newtheorem{defn}[theorem]{Definition}
\newtheorem{cor}[theorem]{Corollary}
\newtheorem{prop}[theorem]{Proposition}
\newcommand\cref[1]{Corollary~\ref{cor:#1}}
\title{Some exact results for generalized Tur\'an problems}
\author{D\'aniel Gerbner\footnote{Alfr\'ed R\'enyi Institute of Mathematics, Hungarian Academy of Sciences, E-mail: \texttt{gerbner@renyi.hu.} Research supported by the J\'anos
    Bolyai Research Fellowship of the Hungarian Academy of Sciences and the
    National Research, Development and Innovation Office -- NKFIH under the
    grants FK 132060, KKP-133819, KH130371 and SNN 129364.}, \enskip Cory Palmer\footnote{Department of Mathematical Sciences,
University of Montana, Missoula, Montana 59812, USA. E-mail: \texttt{cory.palmer@umontana.edu.}}}
\begin{document}

\maketitle

\begin{abstract}
Fix a $k$-chromatic graph $F$. In this paper we consider the question to determine for which graphs $H$ does the Tur\'an graph $T_{k-1}(n)$ have the maximum number of copies of $H$ among all $n$-vertex $F$-free graphs (for $n$ large enough). We say that such a graph $H$ is {\it $F$-Tur\'an-good}. In addition to some general results, we give (among others) the following concrete results:
\begin{enumerate}
    \item[(i)] For every complete multipartite graph $H$, there is $k$ large enough such that $H$ is $K_k$-Tur\'an-good. 
    \item[(ii)] The path $P_3$ is $F$-Tur\'an-good for $F$ with $\chi(F) \geq 4$.
    
    \item[(iii)] The path $P_4$ and cycle $C_4$ are $C_5$-Tur\'an-good.
    
    \item[(iv)] The cycle $C_4$ is $F_2$-Tur\'an-good where $F_2$ is the graph of two triangles sharing exactly one vertex.
\end{enumerate}

\end{abstract}

\section{Introduction}
Fix a graph $F$. We say that a graph $G$ is {\it $F$-free} if it does not contain $F$ as a subgraph. 
A cornerstone of extremal graph theory is Tur\'an's theorem \cite{turan}, which determines the maximum number of edges in an $n$-vertex $K_k$-free graph. The extremal construction is a complete $(k-1)$-partite graph on $n$ vertices such that each class has cardinality either $\lceil n/(k-1)\rceil$ or $\lfloor n/(k-1)\rfloor$. Such a graph is called a {\it Tur\'an graph} and is denoted $T_{k-1}(n)$.

Tur\'an's theorem is the starting point of many avenues of research. The {\it Tur\'an function} $\ex(n,F)$ is the maximum number of edges in an $n$-vertex $F$-free graph. In this notation, Tur\'an's theorem states $\ex(n,K_k)=|E(T_{k-1}(n))|$. We call an $n$-vertex $F$-free graph with $\ex(n,F)$ edges an {\it extremal graph for $F$}. Thus, the Tur\'an graph $T_{k-1}(n)$ is the extremal graph for $K_k$.
The fundamental Erd\H os-Stone-Simonovits theorem \cite{es,ersi} states that if the chromatic number of $F$ is $k\ge 2$, then
\[
\ex(n,F)=(1+o(1))|E(T_{k-1}(n))|=\left(1-\frac{1}{k-1}+o(1)\right)\binom{n}{2}.
\]
Simonovits \cite{sim} characterized those graphs $F$ that have the Tur\'an graph as their unique extremal graph.
We say that an edge $e$ of a graph $F$ is \textit{color-critical} if deleting $e$ from $F$ results in a graph with smaller chromatic number.

\begin{theorem}[Color-critical edge theorem, Simonovits \cite{sim}]\label{crit-thm}
Let $F$ be a $k$-chromatic graph. For $n$ large enough, the unique extremal graph for $F$ is 
the Tur\'an graph $T_{k-1}(n)$ if and only if $F$ has a color-critical edge.
% with a color-critical edge $e$, i.e., $\chi(G-e) < 
% \chi(G)$. Then, for $n$ large enough, the Tur\'an graph $T_{k-1}(n)$ is the unique extremal graph for $F$. Thus,
% $\ex(n,F)=|E(T_{k-1}(n))|$.
\end{theorem}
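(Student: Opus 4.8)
The statement is an equivalence, so I would prove the two implications separately, writing $k=\chi(F)$ throughout. The one fact I would record at the outset is that $T_{k-1}(n)$ is always $F$-free, since it is $(k-1)$-colorable while $F$ is not; hence $\ex(n,F)\ge e(T_{k-1}(n))$ for every $n$. For the implication that a color-critical edge is \emph{necessary}, I would argue the contrapositive: if $F$ has no color-critical edge, then $T_{k-1}(n)$ is not even extremal. Add a single edge $xy$ inside one class of $T_{k-1}(n)$. If the resulting graph contained a copy of $F$, that copy would have to use $xy$ (as $T_{k-1}(n)$ itself is $F$-free), so deleting the edge $e$ of $F$ mapped to $xy$ would embed $F-e$ into $T_{k-1}(n)$, forcing $\chi(F-e)\le k-1$ and making $e$ color-critical, a contradiction. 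Thus $T_{k-1}(n)+xy$ is $F$-free with one more edge, so $T_{k-1}(n)$ is not the (unique) extremal graph.

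For the main direction, suppose $e=ab$ is a color-critical edge, so $\chi(F-e)=k-1$; moreover in every proper $(k-1)$-coloring of $F-e$ the endpoints $a$ and $b$ must share a color class, since otherwise $F$ itself would be $(k-1)$-colorable. Let $G$ be an arbitrary extremal $F$-free graph, so $e(G)=\ex(n,F)\ge e(T_{k-1}(n))$. The plan is a stability-plus-cleaning argument. First I would invoke the Erd\H os--Simonovits stability theorem to fix a partition $V(G)=U_1\cup\cdots\cup U_{k-1}$ into nearly equal parts minimizing the number of ``wrong'' pairs (edges inside parts plus non-edges across parts), and note that this count is $o(n^2)$. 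Call a vertex \emph{good} if it has at least $(1-\tfrac{1}{k-1}-\eps)n$ neighbors in every other part and only few neighbors inside its own part; an averaging argument over the wrong pairs shows all but $o(n)$ vertices are good.

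The crux is an embedding lemma: if two good vertices $x,y$ lying in a common part are adjacent, then $G$ contains $F$. Indeed, take a $(k-1)$-coloring of $F-e$ with $a,b$ in the color class corresponding to the part of $x,y$; send $a\mapsto x$, $b\mapsto y$, and greedily embed the remaining (boundedly many) vertices of each color class into the corresponding part of $G$. Since good vertices are joined to all but $\le\eps n$ vertices of each other part and each color class of $F$ has size at most $|V(F)|$, there is always room to route every edge of $F$ other than $e$, while $e$ itself is realized by $xy$. This contradiction shows that no two good vertices in a common part are adjacent.

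The main obstacle, and the step where I expect the genuine work, is the \emph{cleaning}: upgrading ``all but $o(n)$ vertices are good'' to an exact structure. I would delete the few non-good vertices, conclude from the embedding lemma that the remaining graph is a complete balanced $(k-1)$-partite graph, and then reinsert the deleted vertices one at a time, showing via the same embedding lemma together with the extremality $e(G)\ge e(T_{k-1}(n))$ that each must attach exactly as a new vertex of a $(k-1)$-partite graph (any deviation either creates a copy of $F$ or strictly loses edges). Once $G$ is shown to have no internal edges, extremality forces completeness between parts, because adding any missing cross-edge keeps $G$ $(k-1)$-partite and hence $F$-free; and among complete $(k-1)$-partite graphs on $n$ vertices the unique maximizer of the edge count is the balanced one. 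This yields $G=T_{k-1}(n)$, proving both extremality and uniqueness for $n$ large enough.
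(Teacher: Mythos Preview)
The paper does not supply its own proof of this theorem: it is quoted in the introduction as a classical result of Simonovits and used as a black box throughout. There is therefore nothing in the paper against which to compare your proposal.

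For what it is worth, your sketch follows the standard stability-plus-cleaning route by which Simonovits's theorem is customarily proved, and the necessity direction is clean and correct. In the sufficiency direction your embedding lemma is right, but one sentence overstates what it buys: after deleting the non-good vertices, the lemma tells you only that the remaining graph has no edges inside any part --- not that it is complete between parts or balanced. Those two features enter only at the very end via extremality, exactly as your final two sentences describe; you should not claim them at the intermediate stage. The reinsertion step also needs a mild strengthening of the embedding lemma (a vertex $v$ with linearly many good neighbours in \emph{every} part already yields a copy of $F$, even though $v$ itself is not good), so that extremality can force each reinserted vertex to miss one part entirely. With these adjustments the outline is sound.
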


Here we consider a generalization of the results described above.
Fix graphs $H$ and $G$. Denote the number of copies of $H$ in $G$ by $\cN(H,G)$. Now fix graphs $F$ and $H$. Define
\[
\ex(n,H,F):=\max\{\cN(H,G): G \text{ is an $n$-vertex $F$-free graph}\},
\]
i.e., $\ex(n,H,F)$ is the maximum number of copies of the graph $H$ in an $n$-vertex $F$-free graph.
An early result of Zykov \cite{zykov} (see also Erd\H os \cite{Er1}) determines the maximum number of copies of $K_r$ in a $K_k$-free graph.

\begin{theorem}[Zykov \cite{zykov}]\label{zykov-theorem}
The Tur\'an graph $T_{k-1}(n)$ is the unique $n$-vertex $K_k$-free graph with the maximum number of copies of $K_r$. Thus,
\[
\ex(n,K_r,K_k) = \mathcal{N}(K_r, T_{k-1}(n)) \leq \binom{k-1}{r} \left \lceil \frac{n}{k-1}\right \rceil ^r.
\]
\end{theorem}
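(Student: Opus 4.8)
The plan is to combine Zykov symmetrization with an elementary convexity estimate, and to settle uniqueness through an extremal choice of the number of edges. Throughout, for a vertex $x$ write $d_r(x)$ for the number of copies of $K_r$ through $x$, and assume $2\le r\le k-1$ (otherwise the count is $0$). The basic tool is the \emph{cloning} operation: given non-adjacent vertices $u,v$, replace $v$ by a new vertex whose neighborhood is exactly $N(u)$. I would first record its two properties. It preserves $K_k$-freeness, since the two twins are non-adjacent with equal neighborhoods, so any $K_k$ using the clone yields a $K_k$ of the original graph after swapping the clone for $u$. Moreover, as no copy of $K_r$ can meet two non-adjacent vertices, cloning changes $\cN(K_r,\cdot)$ by exactly $d_r(u)-d_r(v)$; hence cloning the vertex of larger $d_r$ onto the smaller never decreases the count, and strictly increases it unless $d_r(u)=d_r(v)$.

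Next I would show that some extremal graph is complete multipartite. Starting from any $K_k$-free optimizer $G$, I organize the cloning by peeling off a vertex $v^*$ of maximum $d_r$ and cloning it onto each of its non-neighbors; this keeps $G$ optimal and makes $\{v^*\}$ together with the non-neighbors of $v^*$ into an independent set completely joined to $G[N(v^*)]$, which is $K_{k-1}$-free, and one recurses inside $N(v^*)$. The outcome is a complete multipartite optimizer with $t\le k-1$ parts. For part sizes $n_1,\dots,n_t$ a copy of $K_r$ uses one vertex from each of $r$ distinct parts, so $\cN(K_r,K_{n_1,\dots,n_t})=e_r(n_1,\dots,n_t)$, the $r$-th elementary symmetric polynomial. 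For any two parts $i,j$, with $P$ the remaining parts, one has $e_r=e_r(P)+(n_i+n_j)e_{r-1}(P)+n_in_j\,e_{r-2}(P)$, so only the coefficient $n_in_j$ changes when a vertex is moved between parts $i$ and $j$, and $(n_i-1)(n_j+1)>n_in_j$ whenever $n_i\ge n_j+2$; a companion identity shows that splitting a part of size $s\ge 2$ strictly increases $e_r$, gaining the term $(s-1)e_{r-2}(P)>0$ (the relevant $e_{r-2}(P)$ is positive since for $n$ large all parts are nonempty). As every step is strict, the maximum is attained only by the balanced partition into exactly $k-1$ parts, i.e.\ by $T_{k-1}(n)$; bounding each of the $\binom{k-1}{r}$ products by $\lceil n/(k-1)\rceil^{r}$ gives the stated inequality.

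The main obstacle is upgrading this to uniqueness over \emph{all} $K_k$-free graphs, since convexity alone only settles uniqueness among complete multipartite graphs. The key local fact is that at an induced path $u\!-\!v\!-\!w$ in the complement of an optimizer $G$ (so $uv,vw\notin E$, $uw\in E$), cloning $u$ and $v$ onto each other, cloning $v$ and $w$ onto each other, and cloning $v$ onto both $u$ and $w$ all keep $\cN(K_r,\cdot)\le M$, which forces $d_r(u)=d_r(v)=d_r(w)=:m$ and $b=0$, where $b$ is the number of copies of $K_r$ containing the edge $uw$ (indeed the last operation gives count $M+b$); in particular $uw$ lies in no $K_r$. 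Hence an optimizer with the \emph{fewest} edges can contain no such induced path, since deleting the superfluous edge $uw$ would keep it optimal with fewer edges, so it is complete multipartite and equals $T_{k-1}(n)$. Symmetrically, for an optimizer with the \emph{most} edges the equal $d_r$ values make the clones count-preserving, so edge-maximality forces $\deg(u)=\deg(v)=\deg(w)$, after which cloning $v$ onto both $u$ and $w$ produces an optimal graph with exactly one more edge, a contradiction; thus this optimizer is also $T_{k-1}(n)$. These two extremes pin every optimizer to exactly $|E(T_{k-1}(n))|$ edges, whereupon any non-complete-multipartite optimizer would admit the edge-deletion above and drop below this count, which is impossible. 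Therefore every optimizer is $T_{k-1}(n)$. I expect the degree and edge-count bookkeeping in this last paragraph to be the most delicate point, while the existence and convexity steps are routine.
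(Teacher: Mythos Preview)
The paper does not give its own proof of this theorem; it is quoted as a classical result of Zykov (with a parallel reference to Erd\H{o}s) and used as a black box throughout. So there is nothing in the paper to compare your argument against.

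That said, your argument is correct and follows the standard Zykov symmetrization route. The existence of a complete multipartite optimizer and the elementary–symmetric–polynomial optimization identifying $T_{k-1}(n)$ among those are routine, as you say. Your uniqueness step is a bit more indirect than the textbook version but it works: the computation that replacing both $u$ and $w$ by clones of $v$ changes the $K_r$-count by exactly $+b$ (hence $b=0$) and, once $\deg(u)=\deg(v)=\deg(w)$, changes the edge count by exactly $+1$, is correct; sandwiching all optimizers between the edge-minimal and edge-maximal ones then forces every optimizer to be complete multipartite and hence $T_{k-1}(n)$.

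Two small clean-ups. First, replace ``for $n$ large all parts are nonempty'' by the precise range: for $n\ge k-1$ the balancing and splitting inequalities are strict as you wrote; for $r\le n<k-1$ the Tur\'an graph is $K_n$ and the claim is trivial; and for $n<r$ the uniqueness assertion is vacuous since every $K_k$-free graph has zero copies of $K_r$. Second, in the recursive peeling you should note explicitly that cloning $v^*$ onto a non-neighbor leaves $d_r(v^*)$ unchanged (since no $K_r$ through $v^*$ used that non-neighbor) and cannot raise the global maximum (you remain at an optimizer), so $v^*$ legitimately serves as the pivot for every non-neighbor in turn.
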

After several other sporadic results (see, e.g., \cite{BoGy, G2012, Gy, LiGy, gypl, HaETAL}), the general investigation of this function was initiated by Alon and Shikhelman \cite{alonsik}. For several further recent results see, e.g., \cite{C5C3, c5-3, ggmv, gmv, GP, gs,gstz}.
Despite these investigations, there are only few cases when $\ex(n,H,F)$ is determined exactly. One difficulty in determining $\ex(n,H,F)$ exactly is that there are few $F$-free graphs that are good candidates for being extremal constructions for maximizing copies of a graph $H$. Our understanding of $F$-free graphs is not deep enough to describe those graphs that are ``largest'' in some sense. An exception is the Tur\'an graph. In this paper we examine when the Tur\'an graph is the extremal construction for these so-called generalized Tur\'an problems.

\begin{defn}
Fix a $k$-chromatic graph $F$ and a graph $H$ that does not contain $F$ as a subgraph\footnote{We include the condition on $H$ to avoid the degenerate case that $\ex(n,H,F)=\mathcal{N}(H,T_{k-1}(n))=0$ which would allow that $H$ is $F$-Tur\'an-good.}. We say that $H$ is $F$-{\it Tur\'an-good} if $\ex(n,H,F)=\mathcal{N}(H,T_{k-1}(n))$ for every $n$ large enough. If $F=K_k$, we use the briefer term $k$-{\it Tur\'an-good}.
\end{defn}

Using this notation, Tur\'an's theorem states that $K_2$ is $k$-Tur\'an-good for every $k>2$, Theorem~\ref{crit-thm} states that $K_2$ is $F$-Tur\'an-good for any $F$ with a color-critical edge and Theorem~\ref{zykov-theorem} states that $K_r$ is $k$-Tur\'an-good for $r<k$. 

Gy\H ori, Pach and Simonovits \cite{gypl} considered the problem to determine which graphs $H$ are $k$-Tur\'an-good. In particular, they showed that a bipartite graph $H$ on $m\geq 3$ vertices containing $\lfloor m/2 \rfloor$ independent edges is $3$-Tur\'an-good. This implies that
the path $P_l$, the even cycle $C_{2l}$ and the Tur\'an graph $T_2(m)$ are all $3$-Tur\'an-good. They also gave the following general theorem.

\begin{theorem}[Gy\H ori, Pach and Simonovits \cite{gypl}]\label{gpl} 
Let $r\ge 3$ and let $H$ be a $(k-1)$-partite graph with $m>k-1$ vertices
containing $\lfloor m/(k-1)\rfloor$ vertex disjoint copies of $K_{k-1}$. Suppose
further that for any two vertices $u$ and $v$ in the same component of $H$, there is a sequence $A_1,\dots,A_s$ of $(k-1)$-cliques in $H$ such that $u\in A_1$, $v\in A_s$, and for any $i<s$, $A_i$ and $A_{i+1}$ share $k-2$ vertices.
Then $H$ is $k$-Tur\'an-good.
\end{theorem}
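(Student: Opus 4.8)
The plan is to establish two combinatorial facts about the colorings of $H$, reduce the extremal problem to complete $(k-1)$-partite graphs by symmetrization, and then optimize over the part sizes; the last step is where the real difficulty lies.

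First I would extract the structural consequences of the two hypotheses. The clique-chain condition forces each component of $H$ to have an essentially unique proper $(k-1)$-coloring: along a chain $A_1,\dots,A_s$ of $(k-1)$-cliques with $|A_i\cap A_{i+1}|=k-2$, the $k-2$ shared vertices already receive $k-2$ distinct colors, so the single vertex of $A_{i+1}\setminus A_i$ is forced into the unique remaining color and the coloring propagates rigidly; since any two vertices of a component are joined by such a chain, the partition of the component into color classes is determined up to a global permutation of the $k-1$ colors. Second, the $\floor{m/(k-1)}$ vertex-disjoint copies of $K_{k-1}$ are each rainbow in this coloring, so every color class meets every one of them; hence each color class has at least $\floor{m/(k-1)}$ vertices, and therefore all class sizes lie in a window of width $k-2$ about $m/(k-1)$. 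This near-balance of the color classes is the feature that should single out the balanced graph $T_{k-1}(n)$.

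Next I would reduce to complete multipartite graphs. Using Zykov-type symmetrization --- repeatedly replacing a vertex $v$ by a clone of a non-adjacent vertex $u$ lying in at least as many copies of $H$ --- one keeps the host graph $K_k$-free and does not decrease $\cN(H,\cdot)$, and iterating turns non-adjacency into an equivalence relation, i.e.\ produces a complete multipartite graph. Since $H\supseteq K_{k-1}$, a maximizer cannot have fewer than $k-1$ parts, and $K_k$-freeness forbids more, so a maximizer is a complete $(k-1)$-partite graph $K(n_1,\dots,n_{k-1})$ with all parts nonempty. The point requiring care here is the bookkeeping of copies of $H$ that use both $u$ and $v$ (these map $u,v$ to a non-adjacent pair of $H$), so that the symmetrization inequality is genuinely preserved.

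The heart of the argument is then to show that among complete $(k-1)$-partite graphs the balanced one maximizes $\cN(H,\cdot)$ once $n$ is large. Because in $K(n_1,\dots,n_{k-1})$ every copy of $H$ sends each color class of each component injectively into a single part, the uniqueness from the first step shows that $\cN(H,K(n_1,\dots,n_{k-1}))$ is, up to inclusion--exclusion corrections accounting for the disjointness of the components, a positive combination of the symmetric expressions $\sum_{\sigma}\prod_i (n_{\sigma(i)})_{b_i}$, where the $b_i$ are the class sizes and $\sigma$ ranges over assignments of classes to parts; in particular it is an exactly symmetric polynomial in $(n_1,\dots,n_{k-1})$. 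Factoring out $(n_1\cdots n_{k-1})^{\floor{m/(k-1)}}$ --- legitimate since every $b_i\ge\floor{m/(k-1)}$ --- exhibits the leading term as a strongly Schur-concave factor times a bounded-degree symmetric remainder, which is exactly why the maximum over $\sum n_i=n$ should sit at the balanced point. I would make this exact through a smoothing inequality: if $n_i\ge n_j+2$, then moving one vertex from part $i$ to part $j$ strictly increases $\cN(H,K(\vec n))$. Verifying the sign of this difference is the main obstacle; I expect the leading contribution to behave like the clean model $\prod_\ell (n_\ell)_{b}$, for which the inequality reduces to $n_i-n_j-1>0$, while the near-balance of the $b_i$ guarantees the dominant term is Schur-concave rather than Schur-convex, and the hypothesis that $n$ is large is spent exactly in showing the lower-order terms cannot reverse this sign. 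Once the smoothing inequality is in hand, the unique integer optimum of $\sum n_i=n$ is the balanced partition, namely $T_{k-1}(n)$, which proves that $H$ is $k$-Tur\'an-good.
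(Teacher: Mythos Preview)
The paper does not prove Theorem~\ref{gpl}; it is quoted from Gy\H ori, Pach and Simonovits~\cite{gypl} as background, so there is no in-paper argument to compare your proposal against. The paper's own contribution of this flavor is Theorem~\ref{turgood}, and that is proved by a completely different device: no symmetrization and no optimization over part sizes, but rather fixing a copy of $K_{k-1}$, using Hall's theorem to show that the bipartite pattern between it and any disjoint clique is dominated by what one sees in $T_{k-1}(n)$, and then multiplying the three factors (number of $K_{k-1}$'s, number of $H$'s on the rest, number of ways to join).

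As a reconstruction of a symmetrize-then-smooth proof, your outline is reasonable and the two structural observations are correct and well aimed: rigidity of the $(k-1)$-coloring along a clique chain, and the near-balance of the color-class sizes forced by the $\floor{m/(k-1)}$ disjoint $K_{k-1}$'s, are exactly what make the final optimization favor the balanced partition. The place where the proposal is not yet a proof is the symmetrization step. For $\cN(H,\cdot)$ with a non-clique $H$, replacing $v$ by a twin of a non-adjacent $u$ that lies in at least as many copies of $H$ is \emph{not} automatically monotone: writing $a,b$ for the counts of copies through $u$ alone and $v$ alone, and $c,c'$ for the counts through both before and after the move, what you need is $a+c'\ge b+c$, and $a\ge b$ alone does not deliver this without control of $c'-c$. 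Your parenthetical flags the issue but does not resolve it; to push the argument through you would have to use the clique-chain hypothesis here as well (to constrain how non-adjacent pairs of $H$ can embed), rather than only at the optimization stage. The smoothing step, by contrast, is set up correctly: the unique-coloring observation really does make $\cN(H,K(n_1,\dots,n_{k-1}))$ a sum over assignments of the components' color classes to parts, and the near-balance of the $b_i$ is precisely what drives Schur-concavity of the leading term.
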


They mention that this theorem implies that $T_{k-1}(m)$ is $k$-Tur\'an-good and again that paths and even cycles are $3$-Tur\'an-good.
When $H$ is a complete multipartite graph they gave the following theorem.

\begin{theorem}[Gy\H ori, Pach and Simonovits \cite{gypl}]\label{gpl2}
Let $H$ be a complete $r$-partite graph and let $n$ be large enough. If $G$ is an $n$-vertex $K_k$-free graph with the maximum number of copies of $H$, then $G$ is a complete $(k-1)$-partite graph.
\end{theorem}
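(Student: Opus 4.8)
The plan is to run a Zykov-type symmetrization adapted to counting copies of $H$. It is convenient to count \emph{labelled} copies, i.e.\ injective homomorphisms $V(H)\to V(G)$ mapping edges of $H$ to edges of $G$; since this differs from $\cN(H,G)$ only by the constant factor $|\mathrm{Aut}(H)|$, maximizing either is the same problem. Note first that, as $H$ contains $K_r$ and $G$ is $K_k$-free, necessarily $r\le k-1$. For non-adjacent $u,v\in V(G)$ let $G_{u\to v}$ be obtained by deleting all edges at $u$ and joining $u$ to exactly $N(v)$, so that $u,v$ become non-adjacent twins. This preserves $K_k$-freeness: a $K_k$ avoiding $u$ already lies in $G$, while a $K_k$ through $u$ has its other $k-1$ vertices inside $N(v)$ and pairwise adjacent in $G$, hence forms a $K_k$ through $v$ in $G$.

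The heart of the argument is the exchange inequality $\cN(H,G)\le\max\{\cN(H,G_{u\to v}),\cN(H,G_{v\to u})\}$ for non-adjacent $u,v$. To prove it, fix the graph on $W:=V(G)\setminus\{u,v\}$ and set $S:=N(u)\cap W$, $T:=N(v)\cap W$. Partition labelled copies of $H$ by whether they use neither, one, or both of $u,v$. Copies using neither contribute a quantity $c$ depending only on $G[W]$; copies using exactly one contribute $\Phi(S)$ resp.\ $\Phi(T)$ for one fixed monotone set-function $\Phi(\cdot)=\sum_{h}\sum_{\psi}[\psi(N_H(h))\subseteq \cdot\,]$; and copies using both contribute $\Psi(S,T):=\sum_{(h_1,h_2)}\sum_{\psi}[\psi(N_H(h_1))\subseteq S]\,[\psi(N_H(h_2))\subseteq T]$, where $(h_1,h_2)$ runs over ordered non-adjacent pairs of $V(H)$ (the only admissible roles for $u,v$, since $uv\notin E(G)$) and $\psi$ over embeddings of $H-h_1-h_2$ into $W$. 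Thus $\cN(H,G)=c+\Phi(S)+\Phi(T)+\Psi(S,T)$, while the two symmetrizations give $c+2\Phi(T)+\Psi(T,T)$ and $c+2\Phi(S)+\Psi(S,S)$, so that
\[
\cN(H,G_{u\to v})+\cN(H,G_{v\to u})-2\,\cN(H,G)=\Psi(S,S)+\Psi(T,T)-2\Psi(S,T).
\]
Now the structure of $H$ enters decisively: since $H$ is complete multipartite, two non-adjacent vertices $h_1,h_2$ lie in a common part and hence have \emph{identical} neighborhoods, so $\psi(N_H(h_1))=\psi(N_H(h_2))=:X_\psi$. Each summand on the right then collapses to $([X_\psi\subseteq S]-[X_\psi\subseteq T])^2\ge 0$, which proves the inequality.

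With the exchange inequality the rest is structural. A graph is complete multipartite exactly when non-adjacency is transitive, i.e.\ when every non-adjacent pair has equal neighborhoods. If an extremal $G$ were not complete multipartite, choose non-adjacent $u,v$ with $S\ne T$; the inequality keeps $\cN(H,\cdot)$ at its maximum, and the summands above make the gain \emph{strictly} positive as soon as some embedding $\psi$ has $X_\psi$ contained in exactly one of $S,T$, which can be arranged for large $n$ whenever $H$ has a part of size at least two, contradicting maximality. (When $H=K_r$ has all parts of size one this argument is vacuous, but that case is precisely Zykov's theorem, which already gives the unique extremal graph $T_{k-1}(n)$.) Hence every extremal $G$ is complete multipartite, say with $j$ parts, and $K_k$-freeness forces $j\le k-1$.

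Finally, to pin down $j=k-1$, use that adding edges can only increase the number of subgraph copies of $H$. If $j<k-1$, then for large $n$ some part has at least two vertices; splitting it in two only adds edges (between the two new parts) while keeping the graph $K_k$-free with $j+1\le k-1$ parts, hence cannot decrease — and, since $H$ has an edge and $n$ is large, strictly increases — the count, contradicting maximality. Thus $j=k-1$ and $G$ is complete $(k-1)$-partite. The main obstacle throughout is the contribution $\Psi(S,T)$ of copies using both $u$ and $v$: for a general $H$ this term destroys the convexity the exchange inequality needs, and it is exactly the equality of neighborhoods of non-adjacent vertices in a complete multipartite $H$ that rescues it through the identity $a+b-2ab=(a-b)^2$.
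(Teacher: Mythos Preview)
The paper does not give a proof of this theorem; it is quoted from Gy\H ori, Pach and Simonovits \cite{gypl} and used as a black box (together with Lemma~\ref{larg}) to deduce Theorem~\ref{multi}. So there is no in-paper argument to compare against. Your Zykov-type symmetrization is exactly the natural route and, as far as one can tell, in the spirit of the original source. The key step is correct and nicely isolated: for non-adjacent $u,v$ with $S=N(u)\cap W$, $T=N(v)\cap W$, the identity
\[
\cN(H,G_{u\to v})+\cN(H,G_{v\to u})-2\,\cN(H,G)=\sum_{(h_1,h_2),\,\psi}\bigl([X_\psi\subseteq S]-[X_\psi\subseteq T]\bigr)^2\ge 0
\]
goes through precisely because in a complete multipartite $H$ any non-adjacent pair $h_1,h_2$ has $N_H(h_1)=N_H(h_2)$; this is the point where ``$H$ complete multipartite'' is genuinely used, and you flag it clearly. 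The reduction $H=K_r$ to Zykov's theorem and the final ``split a part'' step to force exactly $k-1$ parts are both fine.

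One spot deserves a little more care. To pass from the non-strict exchange inequality to the conclusion that \emph{every} extremal $G$ is complete multipartite, you need the sum above to be strictly positive whenever $S\ne T$. Your sentence ``which can be arranged for large $n$ whenever $H$ has a part of size at least two'' is doing real work here: one must check that in an extremal $G$ there actually exists an embedding $\psi$ of $H-h_1-h_2$ into $W$ with $X_\psi$ contained in exactly one of $S,T$. This is true, but it uses that an extremal $G$ has $\Theta(n^{|V(H)|})$ copies of $H$, so (by replacing a vertex by a twin of a rich vertex if necessary) every vertex lies in $\Omega(n^{|V(H)|-1})$ copies; in particular $|S|,|T|$ are linear in $n$ and the required $\psi$ hitting a vertex of $S\setminus T$ (or $T\setminus S$) exists. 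Spelling this out would close the only visible gap.
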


In \cite{gypl}, the authors remark that a graph $G$ in the above theorem need not be a Tur\'an graph and give an example where the ratio of the sizes of the largest and smallest class of $G$ is not even bounded. 
They also gave an optimization argument to show that $C_4$ is $k$-Tur\'an-good for any $k$. They state the same for $K_{2,3}$, but omit the details. %We remark that in the above results they also determine the threshold on $n$ and show that the Tur\'an graph is the unique graph with that many copies of $H$.

%Furthermore, they note that Theorem~\ref{gpl2} cannot be generalized to $k$-partite graphs that are not necessarily complete.

Less is known in the case when the forbidden graph $F$ is not a clique. 
Ma and Qiu \cite{mq} proved that a $(k-1)$-partite graph $H$ is $k$-Tur\'an-good if it has 
$k-2$ classes of size $s$ and one class of size $t$ with $s\le t<s+1/2+\sqrt{2s+1/4}$. They also proved a common generalization of Theorems~\ref{crit-thm} and \ref{zykov-theorem}.

\begin{theorem}[Ma and Qiu \cite{mq}]\label{MQ}
Let $F$ be graph with $\chi(F) =k >r$ and a color-critical edge. Then $K_r$ is $F$-Tur\'an-good. Moreover, for $n$ large enough, $T_{k-1}(n)$ is the unique $n$-vertex $F$-free graph with the maximum number of copies of $H$.
\end{theorem}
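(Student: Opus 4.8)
The plan is to prove the exact value and the uniqueness statement together by showing that any $F$-free graph $G$ on $n$ vertices maximizing $\cN(K_r,G)$ must be the Tur\'an graph $T_{k-1}(n)$. The cleanest observation is that the problem reduces to \emph{complete multipartite} graphs: since $\chi(F)=k$, every $(k-1)$-partite graph has chromatic number at most $k-1$ and hence cannot contain $F$, so every $(k-1)$-partite graph is automatically $F$-free. Consequently, if the maximizer $G$ is $(k-1)$-partite, we may add all missing edges between its parts without destroying $F$-freeness, which can only increase $\cN(K_r,\cdot)$; thus $G$ must be complete $(k-1)$-partite. Writing the part sizes as $n_1,\dots,n_{k-1}$, we have $\cN(K_r,G)=e_r(n_1,\dots,n_{k-1})$, the $r$-th elementary symmetric polynomial, and moving one vertex from an over-full part $i$ to an under-full part $j$ changes this count by $e_{r-2}(\text{other }k-3\text{ parts})\cdot(n_i-n_j-1)$, which is \emph{strictly} positive whenever the parts are unbalanced (here we use $k-1\ge r$, so $e_{r-2}$ of the remaining parts is nonzero). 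Hence among complete $(k-1)$-partite graphs the balanced Tur\'an graph is the unique maximizer, settling both the value $\ex(n,K_r,F)=\cN(K_r,T_{k-1}(n))$ and uniqueness.

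It therefore remains to show that the maximizer $G$ is $(k-1)$-partite, and I would establish this via \emph{stability}. First, by the Erd\H os--Stone--Simonovits theorem combined with a supersaturation/counting argument, a near-optimal $F$-free graph contains $(1-o(1))\cN(K_r,T_{k-1}(n))$ copies of $K_r$ and is close to $T_{k-1}(n)$ in edit distance: there is a partition $V(G)=V_1\cup\dots\cup V_{k-1}$ with at most $\eps n^2$ edges inside parts and non-edges between parts. Second, I would regularize the degrees by induction on $n$: each vertex of the extremal graph must lie in at least $\cN(K_r,T_{k-1}(n))-\cN(K_r,T_{k-1}(n-1))=\Theta(n^{r-1})$ copies of $K_r$, since otherwise deleting it would yield an $F$-free graph on $n-1$ vertices beating $\cN(K_r,T_{k-1}(n-1))$, contradicting the statement for $n-1$. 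Combined with the stability partition, this forces every vertex to be adjacent to all but $o(n)$ of the vertices outside its own part.

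The heart of the argument---and the step I expect to be the main obstacle---is converting a single internal edge into a forbidden copy of $F$. Let $e=ab$ be a color-critical edge of $F$ and fix a proper $(k-1)$-coloring of $F-e$ with color classes $C_1,\dots,C_{k-1}$; since $e$ is color-critical, $a$ and $b$ must receive the same color, say $a,b\in C_1$. Suppose for contradiction that $G$ has an edge $xy$ with $x,y\in V_1$. I would embed $F$ into $G$ by mapping $C_1$ into $V_1$ with $a\mapsto x$ and $b\mapsto y$, and each $C_i$ into $V_i$ for $i\ge 2$. Every edge of $F$ other than $e$ joins two distinct color classes and so must be realized by a cross-part edge, while $e$ is realized by $xy$; as $F$ has boundedly many vertices and each already-chosen vertex misses only $o(n)$ vertices outside its part, a greedy selection inside the large parts produces the remaining images with all required adjacencies present. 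This yields a copy of $F$ in $G$, a contradiction, so $G$ has no internal edge and is $(k-1)$-partite, whereupon the reduction of the first paragraph finishes the proof.

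The real work lies in making the stability and degree-regularization quantitative enough---choosing $\eps$ small relative to $1/|V(F)|$ and to $k$---that the greedy embedding never gets stuck; this is precisely the mechanism by which the color-critical edge turns an internal edge into a copy of $F$. The argument specializes to Simonovits's proof of Theorem~\ref{crit-thm} when $r=2$, while the concluding balancing step generalizes Zykov's Theorem~\ref{zykov-theorem}.
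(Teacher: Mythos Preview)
The paper does not contain a proof of this theorem; it is quoted from Ma and Qiu \cite{mq} and used as a black box in later arguments (Proposition~\ref{p3-critial} and Lemma~\ref{matching-complement}). There is therefore no in-paper proof to compare your proposal against.

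That said, your outline is the standard stability argument and is essentially what Ma and Qiu do. The reduction to complete $(k-1)$-partite graphs and the balancing via $e_r(n_1,\dots,n_{k-1})$ are correct; note that $e_{r-2}$ of the remaining $k-3$ part-sizes is strictly positive precisely because $r-2\le k-3$ and all parts are nonempty in a maximizer. The stability-plus-embedding scheme for proving that the extremal $G$ is $(k-1)$-partite is the right plan, and your use of the color-critical edge to turn an internal edge into a copy of $F$ is exactly the mechanism. One step deserves more care than you give it: the implication ``$v$ lies in at least $\cN(K_r,T_{k-1}(n))-\cN(K_r,T_{k-1}(n-1))$ copies of $K_r$'' $\Rightarrow$ ``$v$ is adjacent to all but $o(n)$ vertices of \emph{every} other part'' is true but not immediate when $r<k-1$, since a vertex can pick up $\Theta(n^{r-1})$ copies of $K_r$ while ignoring one part entirely. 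What you need is that the $K_r$-degree of $v$ is, up to $o(n^{r-1})$, equal to $e_{r-1}(d_2,\dots,d_{k-1})$ with $d_j=|N(v)\cap V_j|\le (1+o(1))n/(k-1)$, and that this symmetric function attaining $(1-o(1))$ of its constrained maximum forces every $d_j$ to be $(1-o(1))n/(k-1)$ (this uses $\binom{k-3}{r-2}>0$). You also have to handle separately the $o(n)$ vertices with many neighbors in their own part before this per-vertex analysis applies. With those details made quantitative your proof is complete.
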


Let us discuss some simple conditions that force $H$ to not be $F$-Tur\'an-good.
When $\chi(H) \geq \chi(F) =k$, then the Tur\'an graph $T_{k-1}(n)$ contains no copies of $H$, so $H$ cannot be $F$-Tur\'an-good. Observe that if the sizes of color classes of $H$ are very unbalanced, then it is possible that among the complete $(\chi(F)-1)$-partite graphs on $n$ vertices, the Tur\'an graph does not have the maximum number of copies of $H$. When $H$ is a complete multipartite graph, a straightforward optimization can determine which complete $(\chi(F)-1)$-partite graphs contain the maximum number of copies of $H$. Some calculations of this type were performed in \cite{gypl,mq,cnr}. If $F$ is a $k$-chromatic graph with no color-critical edge, then we can add an edge $e$ to the Tur\'an graph $T_{k-1}(n)$ and still have no copy of $F$. If $\chi(H)\leq \chi(F)-2$, then it is easy to see that in the resulting graph (for $n$ large enough) there are copies of $H$ that contain $e$. Therefore, in this case, $H$ is not $F$-Tur\'an-good. Thus, when $F$ has no color-critical edge we can restrict our attention to the case when $\chi(H)=\chi(F)-1$.

\smallskip

The rest of this paper is organized as follows. In Section \ref{cliq} we consider $k$-Tur\'an-good graphs and prove a theorem that is of a similar flavor to Theorem \ref{gpl}. We also show that for any complete multipartite graph $H$, there is a $k_0$ large enough such that if $k \geq k_0$, then $H$ is $k$-Tur\'an-good. In Section~\ref{tura} we consider the case when $F$ is not a clique. Among others, we prove that $P_3$ is $F$-Tur\'an-good for $F$ with $\chi(F) \geq 4$, that $P_4$ and $C_4$ are $C_5$-Tur\'an-good, and that $C_4$ is $F_2$-Tur\'an-good where $F_2$ is the graph of two triangles sharing exactly one vertex. We finish the paper with some concluding remarks and conjectures in Section \ref{rema}. 

\section{Forbidding cliques}\label{cliq}

The main theorem of this section describes a method to construct $k$-Tur\'an-good graphs.

\begin{theorem}\label{turgood}
	Let $H$ be a $k$-Tur\'an-good graph. Let $H'$ be any graph constructed from $H$ in the following way.
	Choose a complete subgraph of $H$ with vertex set $X$, add a vertex-disjoint copy of $K_{k-1}$ to $H$ and join the vertices in $X$ to the vertices of $K_{k-1}$ by edges arbitrarily.
	Then $H'$ is $k$-Tur\'an-good.
\end{theorem}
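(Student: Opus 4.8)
The plan is to show that the Turán graph $T_{k-1}(n)$ maximizes the number of copies of $H'$ among all $n$-vertex $K_k$-free graphs. The strategy is to leverage the fact that $H$ is already $k$-Turán-good and to understand how copies of $H'$ decompose relative to copies of $H$. The key structural observation is this: since $H'$ is obtained from $H$ by attaching a fresh $K_{k-1}$ (call its vertex set $Y$) joined to a clique $X\subseteq V(H)$, any embedding of $H'$ into a host graph $G$ restricts to an embedding of $H$ on the first part and embeds $Y$ as a $(k-1)$-clique in $G$ whose image is completely joined to the image of $X$. So I would first set up a counting identity expressing $\cN(H',G)$ as a sum, over copies of $H$ in $G$, of the number of ways to extend that copy by placing the extra $K_{k-1}$ correctly.

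The main technical step is an extremal-graph / supersaturation analysis. Since $H$ is $k$-Turán-good, for $n$ large the extremal $n$-vertex $K_k$-free graph for copies of $H$ is the Turán graph, and I would want a stability-type statement: any $G$ maximizing $\cN(H',G)$ is ``close'' to $T_{k-1}(n)$, after which I can compare the extension counts directly. The cleanest route is probably to combine Theorem~\ref{gpl}-style reasoning with the symmetrization/Zykov symmetrization idea: I would argue that an optimal $G$ may be assumed complete multipartite (with at most $k-1$ parts by $K_k$-freeness), since symmetrizing two non-adjacent vertices with the same neighborhood does not decrease $\cN(H',G)$ and eventually produces a complete multipartite graph. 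Once $G$ is complete $(k-1)$-partite with part sizes $n_1,\dots,n_{k-1}$, both $\cN(H,G)$ and the extension count become explicit monomials in the $n_i$, and a convexity/balancing optimization shows the product is maximized when the parts are as equal as possible, i.e.\ at $T_{k-1}(n)$.

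More concretely, once $G=K_{n_1,\dots,n_{k-1}}$, I would observe that placing $H'$ amounts to choosing an ordered system of parts for the vertices of $H'$ respecting its $(k-1)$-coloring, and the attached $K_{k-1}$ together with the clique $X$ forces the $k-1$ vertices of $Y$ to occupy the $k-1$ distinct parts, with $X$ occupying a subset of those same parts. The resulting expression for $\cN(H',G)$ factors as a sum of products of the $n_i$, and each term is a symmetric function that is Schur-concave, so it is maximized by the balanced partition. This reduces the whole problem to the same optimization that certifies $H$ is $k$-Turán-good, now carried out one ``degree higher'' because of the extra $K_{k-1}$ factor. I expect the author's proof to use precisely this inductive/factored structure, possibly citing Theorem~\ref{gpl} or a symmetrization lemma to avoid redoing the optimization from scratch.

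The hard part will be justifying that an optimal host graph may be taken to be complete multipartite \emph{while tracking copies of $H'$ rather than copies of $H$}. Symmetrization is delicate here: merging two vertices changes the extension counts, and one must verify that the copies of the appended $K_{k-1}$ are not destroyed in a way that outweighs the gain. I would handle this by choosing, among two non-adjacent vertices $u,v$, the one lying in more copies of $H'$ and duplicating its neighborhood onto the other; the key inequality is that this operation does not decrease $\cN(H',G)$, which requires that the local contribution of each vertex to copies of $H'$ be comparable in the same way it is for $H$. Making this robust for all choices of how $X$ is joined to the added $K_{k-1}$ (the ``arbitrarily'' in the statement) is the main obstacle, and I anticipate needing the largeness of $n$ and a supersaturation bound to absorb the lower-order terms coming from non-multipartite $G$.
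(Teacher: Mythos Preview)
Your plan has a real gap at the symmetrization step, and the paper's proof avoids that step entirely by taking a different route.

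Zykov symmetrization is not known to work for counting copies of an arbitrary fixed graph. When you pick non-adjacent $u,v$ and duplicate the neighborhood of the one lying in more copies of $H'$, you must account for copies of $H'$ that use \emph{both} $u$ and $v$ (as non-adjacent vertices of $H'$); these can be destroyed, and there is no reason the gain from the singleton copies dominates. This is exactly why Theorem~\ref{gpl2} is stated only for complete multipartite $H$ and requires a separate argument. You flag this as ``the hard part'' but do not supply a mechanism to close it; appealing to supersaturation or stability does not help, because you would need symmetrization to hold for the \emph{extremal} graph, not just for graphs close to Tur\'an. Similarly, your Schur-concavity claim for $\cN(H',K_{n_1,\dots,n_{k-1}})$ is unjustified: the hypothesis is that $H$ is $k$-Tur\'an-good, which says nothing about the shape of $\cN(H,\cdot)$ as a function of part sizes, only that its maximum occurs at the balanced point. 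Your approach, as written, never actually invokes that hypothesis in a way that does work.

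The paper's argument is shorter and sidesteps both issues. It writes the number of copies of $H'$ in any $K_k$-free $G$ as a product of three factors (divided by a constant overcount): the number of copies of $K_{k-1}$ in $G$, the number of copies of $H$ on the remaining $n-k+1$ vertices, and the number of ways to attach the clique $X$ to a fixed $K_{k-1}$. The first factor is maximized by $T_{k-1}(n)$ via Theorem~\ref{zykov-theorem}; the second by $T_{k-1}(n-k+1)$ via the hypothesis that $H$ is $k$-Tur\'an-good. For the third, the key lemma is purely structural: in a $K_k$-free graph, the bipartite pattern between a fixed $(k-1)$-clique $K$ and any disjoint clique $Y$ is contained in $K_{|K|,|Y|}$ minus a matching saturating $Y$. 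This is proved by Hall's theorem applied to the bipartite \emph{non-adjacency} graph between $K$ and $Y$; a failure of Hall's condition would produce a $K_k$. Since the Tur\'an graph realizes exactly this maximal pattern between any $K_{k-1}$ and any clique, the attachment count is also maximized there. No symmetrization, no optimization over part sizes, and the hypothesis on $H$ is used directly.
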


\begin{proof} By Theorem~\ref{zykov-theorem}, the maximum number of copies of $K_{k-1}$ in a $K_k$-free graph is achieved by the Tur\'an graph $T_{k-1}(n)$. Since $H$ is $k$-Tur\'an-good, the Tur\'an graph $T_{k-1}(n-k+1)$ has the maximum number of copies of $H$ among $K_k$-free graphs on $n-k+1$ vertices. We will show that $T_{k-1}(n)$ has the maximum number of copies of $H'$.
	
	Let $G$ be a $K_{k}$-free graph on $n$ vertices with the maximum number of copies of $H'$. 
	Since $H'$ contains a copy of $K_{k-1}$, the graph $G$ must contain a copy of $K_{k-1}$.
	Let $K$ be a copy of $K_{k-1}$ in $G$. 	
	Every other vertex of $G$ is adjacent to at most $k-2$ vertices of $K$. Let $Y$ be a complete graph that is disjoint from $K$. %Let $y_1,y_2,\dots, y_t$ for $t \leq k-1$ be the vertices of $Y$. 
	
	Consider an auxiliary bipartite graph with classes formed by the vertices of $Y$ and $K$ and join two vertices by an edge if they are non-adjacent in $G$.
	
	Suppose this bipartite graph does not have a matching saturating the class $Y$, i.e., a matching that uses every vertex of $Y$. Then, by Hall's theorem, there exists a non-empty subset $Y'$ of $Y$ whose neighborhood in $K$ has size less than $|Y'|$. In the original graph $G$ this means that all of the vertices in $Y'$ are connected to a fixed set of more than $|K|-|Y'|$ vertices in $K$. As $Y'$ and $K$ are complete graphs, this gives a copy of $K_k$ in $G$, a contradiction.
	Therefore, this auxiliary bipartite graph has a matching saturating $Y$ which implies that in $G$ the edges between $Y$ and $K$ are a subgraph of a complete bipartite graph minus a matching saturating $Y$.
	
	On the other hand, in a $(k-1)$-partite Tur\'an graph the edges between $K_{k-1}$ and a clique of size $|Y|$
	form a complete bipartite graph minus a matching saturating the clique of size $|Y|$.
	This implies that there are at least as many ways to join the vertices of a copy of $H$ with a copy of $K_{k-1}$ in a Tur\'an graph as in $G$.
	
	The number of copies of $H'$ is the product of the number of copies of $K_{k-1}$, the number of copies of $H$ on the remaining $n-k+1$ vertices and the number of ways to join the vertices of $K_{k-1}$ and $H$ all divided by the number of times a copy of $H'$ was counted. The first three quantities are maximized by the Tur\'an graph, while the last quantity depends only on $H'$. This implies that the number of copies of $H'$ is maximized by $T_{k-1}(n)$.
\end{proof}

We remark that Theorem \ref{turgood} implies the same results mentioned after Theorem \ref{gpl}. However, neither Theorem \ref{turgood} nor Theorem \ref{gpl} imply the other. They both use copies of $K_{k-1}$ as building blocks and connect them with additional edges, but Theorem \ref{gpl} allows adding many edges. For example, when $k=3$ the only assumptions on $H$ are that $H$ is bipartite and has a matching of size $\lfloor |V(H)|/2\rfloor$. In Theorem \ref{turgood}, when $k=3$, if we build $H$ starting from a single edge, there is always an edge (the one added last) such that its vertices are incident to at most two other vertices. On the other hand, in Theorem \ref{turgood} we do not need a sequence of $(k-1)$-cliques connecting any two vertices. For example we can take two copies of $K_{k-1}$ and connect them with a single edge. The resulting graph is $k$-Tur\'an-good because of Theorem \ref{turgood}.

Nonetheless, both Theorem \ref{gpl} and Theorem \ref{turgood} require copies of $K_{k-1}$ as building blocks. For example, we know that $P_l$ is $3$-Tur\'an-good, and Theorem \ref{gpl2} implies that $P_3$ is $k$-Tur\'an-good, but for longer paths neither Theorem~\ref{gpl} nor Theorem~\ref{turgood} can be applied. We conjecture that paths are $k$-Tur\'an good (see Conjecture~\ref{paths-are-good} in Section~\ref{rema}). Here we are able to show that the maximum number of copies of $P_l$ in $K_k$-free graphs is asymptotic to the number of copies in the Tur\'an graph. In fact, we can replace $K_k$ with any $k$-chromatic graph $F$.

\begin{proposition}\label{path-asym}
   If $F$ is $k$-chromatic with $k>2$, then $\ex(n,P_l,F)=(1+o(1))\cN(P_l,T_{k-1}(n))$.
\end{proposition}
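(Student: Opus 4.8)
The plan is to prove the matching lower and upper bounds, with essentially all the work in the upper bound. The lower bound is immediate: since $\chi(T_{k-1}(n))=k-1<k=\chi(F)$, the Tur\'an graph contains no copy of $F$, so it is an admissible $F$-free graph and $\ex(n,P_l,F)\ge \cN(P_l,T_{k-1}(n))$.

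For the upper bound I would pass from counting subgraph copies to counting homomorphisms, so that the adjacency matrix becomes available. Let $G$ be any $F$-free graph on $n$ vertices with adjacency matrix $A$, and let $\hom(P_l,G)$ denote the number of walks $v_1v_2\cdots v_l$ (homomorphic images of $P_l$). Since the injective walks are exactly $2\,\cN(P_l,G)$ (each path is traversed in two directions) and the remaining walks only add, we have $\cN(P_l,G)\le \tfrac12\hom(P_l,G)$. Writing $\mathbf{1}$ for the all-ones vector, $\hom(P_l,G)=\mathbf{1}^\top A^{l-1}\mathbf{1}$. Expanding $\mathbf{1}$ in an orthonormal eigenbasis $\{u_i\}$ of $A$ and using that the spectral radius of the nonnegative symmetric matrix $A$ equals its largest eigenvalue $\lambda_1=\lambda_1(G)$, every eigenvalue satisfies $\lambda_i^{\,l-1}\le \lambda_1^{\,l-1}$, so
\[
\hom(P_l,G)=\sum_i \lambda_i^{\,l-1}\langle \mathbf{1},u_i\rangle^2 \le \lambda_1^{\,l-1}\sum_i \langle \mathbf{1},u_i\rangle^2 = \lambda_1^{\,l-1}\,n .
\]

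The crucial input is then the correct bound on $\lambda_1$, namely the spectral version of the Erd\H os--Stone--Simonovits theorem (due to Nikiforov): if $G$ is $F$-free with $\chi(F)=k$, then $\lambda_1(G)\le \bigl(1-\tfrac{1}{k-1}+o(1)\bigr)n$. Combining gives $\cN(P_l,G)\le \tfrac12\lambda_1^{\,l-1}n\le (1+o(1))\tfrac12\bigl(1-\tfrac1{k-1}\bigr)^{l-1}n^l$. To see this matches the Tur\'an value I would count walks in $T_{k-1}(n)$ directly: a homomorphism of $P_l$ is a proper $(k-1)$-coloring pattern of the path, of which there are $(k-1)(k-2)^{l-1}$, together with an independent choice of vertex in the prescribed class for each of the $l$ positions, giving $\approx (n/(k-1))^l$ choices. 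Hence $\hom(P_l,T_{k-1}(n))=(1+o(1))\bigl(1-\tfrac1{k-1}\bigr)^{l-1}n^l$, and since the non-injective walks number only $O(n^{l-1})$ we get $\cN(P_l,T_{k-1}(n))=(1+o(1))\tfrac12\bigl(1-\tfrac1{k-1}\bigr)^{l-1}n^l$, which agrees with the upper bound asymptotically and closes the argument.

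The delicate point—and the one place a genuine $F$-free input is needed—is obtaining the leading constant $1-\tfrac1{k-1}$ in the bound on $\lambda_1$. One cannot substitute the plain edge count $e(G)\le (1-\tfrac1{k-1}+o(1))\binom n2$: inserting it into $\lambda_1\le\sqrt{2e(G)}$ yields only $\lambda_1\le (\sqrt{1-1/(k-1)}+o(1))\,n$, and since $\sqrt{1-1/(k-1)}>1-1/(k-1)$ for $k>2$, this overshoots the Tur\'an value for every $l\ge 3$. Thus the spectral Erd\H os--Stone--Simonovits bound (or, as an alternative route, a stability argument placing $G$ within $o(n^2)$ edges of a balanced complete $(k-1)$-partite graph and showing that walks crossing between parts dominate) carries the whole proof, while the reduction from walks to path copies and the Tur\'an-graph computation are routine.
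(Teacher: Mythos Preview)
Your proof is correct and follows essentially the same route as the paper: bound $\cN(P_l,G)$ by half the number of walks, bound walks by $n\lambda_1^{\,l-1}$, invoke the spectral Erd\H os--Stone--Simonovits bound of Nikiforov (the paper also credits Babai--Guiduli) to control $\lambda_1$, and then compute $\cN(P_l,T_{k-1}(n))$ directly. Your eigenbasis expansion and your homomorphism count in $T_{k-1}(n)$ are slightly more explicit than the paper's treatment, and your closing observation that the naive bound $\lambda_1\le\sqrt{2e(G)}$ would not suffice is a nice addition, but the core argument is the same.
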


\begin{proof}
We will use spectral methods as they were used in \cite{gstz} and \cite{ggmv}. We use the following simple facts: every path is a walk and a path with more than 2 vertices corresponds to two walks (one starting from each end-vertex of the path). Therefore, the number of walks of length $l-1$ (i.e. having $l-1$ edges) is at least twice the number of paths of length $l-1$. 

For a matrix $M$ let $\mu(M)$ denote the largest eigenvalue of $M$. Now let $A(G)$ be the adjacency matrix of $G$. The number of walks of length $l-1$ in $G$ is at most $\mu(A(G))^{l-1}/n$. (This is well-known, see \cite{gstz} and \cite{ggmv} for simple proofs.)

The largest eigenvalue of the adjacency matrix of graph is well-studied. Babai and Guidulli \cite{guid} and independently Nikiforov \cite{nik} proved that if $F$ has chromatic number $k$ and $G$ is an $n$-vertex $F$-free graph, then $\mu(A(F))=(1-\frac{1}{k-1}+o(1))n$. Therefore, we obtain that 
\[
\ex(n,P_l,F)\le \frac{1}{2}\left(1-\frac{1}{k-1}+o(1)\right)^{l-1}n^{l}.
\]
Now let us count the number of copies of $P_l$ in the Tur\'an graph $T_{k-1}(n)$.
Counting greedily we have $n$ choices for the first vertex. Each subsequent vertex must be in a different class of $T_{k-1}(n)$ from its predecessor and must be different from the previous vertices. Therefore, at each step (after the first) the number of choices for a vertex is at least
\[
n-\left\lceil \frac{n}{k-1}\right\rceil-l+1=\left(1-\frac{1}{k-1}-o(1)\right)n.
\]
In this way, every path is counted twice. Therefore, the number of paths in $T_{k-1}(n)$ is 
\[
\mathcal{N}(P_l,T_{k-1}(n)) = \frac{1}{2}\left(1-\frac{1}{k-1}-o(1)\right)^{l-1}n^{l}.
\]
\end{proof}

We now turn our attention to the case when $H$ is a complete multipartite graph. We begin with a lemma.

\begin{lemma}\label{larg}
   For any graph $H$ there are integers $k_0$ and $n_0$ such that if $k\ge k_0$ and $n\ge n_0$, then for any complete $(k-1)$-partite $n$-vertex graph $G$ we have 
   $\cN(H,G)\le \cN(H,T_{k-1}(n))$.
\end{lemma}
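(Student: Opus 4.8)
The plan is to fix a maximizer among complete $(k-1)$-partite graphs and show that for $k$ large it must be balanced. Write $V(H)=\{1,\dots,h\}$, set $(a)_b=a(a-1)\cdots(a-b+1)$, and let $G$ be complete $(k-1)$-partite with part sizes $n_1,\dots,n_{k-1}$. A copy of $H$ in $G$ is exactly an injective map $V(H)\to V(G)$ sending edges of $H$ to pairs in distinct parts, so
\[
|\mathrm{Aut}(H)|\cdot\cN(H,G)=\sum_{\phi}\ \prod_{a=1}^{k-1}(n_a)_{|\phi^{-1}(a)|},
\]
the sum over proper colorings $\phi\colon V(H)\to[k-1]$. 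This is a symmetric function of $(n_1,\dots,n_{k-1})$, and since there are finitely many integer partitions of $n$ into $k-1$ parts a maximizer $G^*$ exists. I would show that if two parts of $G^*$ have sizes $x\ge y+2$, then moving one vertex from the first to the second strictly increases $\cN(H,\cdot)$; this forces all parts of $G^*$ to be within $1$ of each other, i.e.\ $G^*=T_{k-1}(n)$.

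For the smoothing step, fix parts $i,j$ of sizes $x=n_i,\ y=n_j$ and freeze the rest. Grouping colorings by the disjoint independent sets $S=\phi^{-1}(i)$ and $T=\phi^{-1}(j)$ gives
\[
|\mathrm{Aut}(H)|\cdot\cN(H,G)=\sum_{(S,T)}(x)_{|S|}(y)_{|T|}\,W(S,T),
\]
where $W(S,T)$ is the weighted number of proper colorings of $H-(S\cup T)$ in the remaining $k-3$ colors; it is independent of $x,y$ and symmetric, $W(S,T)=W(T,S)$. The crucial point is that $W$ depends only on the untouched induced subgraph, so $W(\{u\},\{v\})=W(\{u,v\},\emptyset)=:W_{uv}$ for every pair $u\ne v$ (the edge $uv$, present or not, is irrelevant once $u,v$ are removed). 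Collecting the terms with $|S|+|T|\le 2$ and applying the move $x\mapsto x-1,\ y\mapsto y+1$ via $\Delta(xy)=x-y-1$ and $\Delta\big((x)_2+(y)_2\big)=-2(x-y-1)$, the non-adjacent pairs cancel and the change from these terms is exactly the clean main term
\[
2(x-y-1)\sum_{uv\in E(H)}W_{uv},
\]
which is positive provided $H$ has an edge (the case relevant here, since the lemma is applied to complete multipartite $H$) and $k\ge h+3$ so that each $W_{uv}>0$; its size is $\Theta\big((x-y-1)\,n^{\,h-2}\big)$.

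It remains to control the terms with $|S|+|T|=d\ge 3$, and here I would reuse the $x\leftrightarrow y$ symmetry. Each homogeneous degree-$d$ part in $(x,y)$ is a symmetric polynomial, hence a polynomial in the frozen $x+y$ and in $xy$, and the move sends $xy\mapsto(x-1)(y+1)=xy+(x-y-1)$. Thus every degree-$d$ piece changes by a multiple of $(x-y-1)$ times lower powers of $xy$; a size count (each part has $\Theta(n/k)$ vertices while $W(S,T)=\Theta(n^{\,h-d})$) then shows the degree-$d$ contribution to the change is $O_H\!\big((x-y-1)\,n^{\,h-2}/k^{\,d-2}\big)$. Summing over $d\ge 3$ gives an error $O_H\!\big((x-y-1)\,n^{\,h-2}/k\big)$, smaller than the main term by a factor $1/k$. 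Hence for $k\ge k_0(H)$ and $n\ge n_0$ the total change is positive, which completes the smoothing and the lemma.

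The step I expect to be the main obstacle is exactly this last estimate made \emph{uniform over all part-size regimes}: the bound is transparent when the two chosen parts are comparable, but a maximizer could a priori have very unequal parts, where $xy$ is small and the ``$(x-y-1)$ times lower powers of $xy$'' control is weakest. I would dispose of this by first ruling out badly unbalanced maximizers: the copies of $H$ are dominated by the ``rainbow'' contribution $\tfrac{h!}{|\mathrm{Aut}(H)|}\,e_h(n_1,\dots,n_{k-1})$, the elementary symmetric polynomial $e_h$ is Schur-concave and so is far below its balanced value on a very unequal partition, while the total of the remaining (defect) terms is only $O_H(n^{h}/k)$. This confines any maximizer to the regime where all parts are $\Theta(n/k)$, in which the degree-$d$ estimate above is valid, and the balance-favoring main term provably beats the imbalance-favoring higher-order terms once $k$ exceeds a threshold depending only on $H$ — which is precisely why the hypothesis requires $k\ge k_0$.
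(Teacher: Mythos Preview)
Your algebraic setup is correct and the route is genuinely different from the paper's. The paper first reduces to the case where $H$ is complete multipartite (each copy of $H$ sits in one of boundedly many complete $\chi(H)$-partite graphs on $|V(H)|$ vertices), and then runs the smoothing in two explicit regimes: if two parts satisfy $|A|\ge |V(H)|\cdot|B|$ it shows directly that moving a vertex helps by analysing the bipartite trace $H'=H\cap(A\cup B)$; otherwise all parts lie within a factor $|V(H)|$ of each other, hence each has size at most $\epsilon n$ once $k$ is large, and a crude count of copies created versus destroyed finishes. Your approach, by contrast, extracts the universal main term $2(x-y-1)\sum_{uv\in E(H)}W_{uv}$ from a clean cancellation between the $(\{u\},\{v\})$ and $(\{u,v\},\emptyset)$ contributions --- this is a sharper identification of why balance wins, and it does not require the reduction to complete multipartite $H$.

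The gap is exactly where you flag it, but your proposed patch does not close it. The assertion that ``the total of the remaining (defect) terms is only $O_H(n^{h}/k)$'' is false outside the balanced regime: the non-rainbow contribution is bounded only by $O_H\bigl(n^{h-2}\sum_a n_a^2\bigr)$, and $\sum_a n_a^2$ is $\Theta(n^2/k)$ precisely when the parts are already comparable; if one part has size $\Theta(n)$ then $\sum_a n_a^2=\Theta(n^2)$ and the defect is $\Theta(n^h)$. Concretely, for $H=K_{1,t}$ and a $G$ with one part of size $n/2$, the colourings putting all $t$ leaves in that part contribute $\Theta(n^{t+1})$, which is the full order of $\cN(H,G)$ --- so Schur-concavity of $e_h$ alone cannot confine the maximizer, and the argument as written is circular (you assume balance to get the defect bound that is then used to establish balance). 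The same issue bites your main term: $W_{uv}=\Theta(n^{h-2})$ only when $n-x-y=\Theta(n)$. The paper sidesteps all of this by proving the smoothing step \emph{directly} in the badly-unbalanced case, exploiting the complete-multipartite structure of $H$; in your framework you would need either a separate analysis of $\Delta\cN$ when $x\gg y$, or an independent bound $\cN(H,G)<\cN(H,T_{k-1}(n))$ valid whenever $\max_a n_a\ge C n/k$, and neither follows from what you have written.
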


\begin{proof} Suppose $G$ contains the maximum number of copies of $H$ among all complete $(k-1)$-partite graphs on $n$ vertices. Suppose, for the sake of a contradiction, that $G$ is not the Tur\'an graph $T_{k-1}(n)$.

Observe first that we can assume $H$ is a complete multipartite graph. Indeed, if $H$ has chromatic number $r$, then there is a constant number of ways to add edges to $H$ to create a complete $r$-partite graph with $|V(H)|$ vertices. Each copy of $H$ in $G$ is contained in such a complete $r$-partite graph in $G$. Given such a complete $r$-partite graph, we can count the number of copies of $H$ it contains. Therefore, if the number of copies of each such complete $r$-partite graph is maximized by the Tur\'an graph $T_{k-1}(n)$, then the same holds for $H$.

%Let $\epsilon=\epsilon(H)>0$ be a constant chosen later. 
We distinguish two cases.

\smallskip
{\bf Case 1:} There are two vertex partition classes $A$ and $B$ of $G$ such that $|A|\ge |V(H)||B|$.

\smallskip
In this case we will move a vertex from $A$ to $B$ to create a new complete $(k-1)$-partite graph. We will show that this new graph contains more copies of $H$ than $G$. Observe that $H$ intersects $A$ and $B$ in a bipartite graph $H'$. The number of ways to extend $H'$ to $H$ using other classes of $G$ does not change when moving a vertex from $A$ to $B$. Therefore, if the number of copies of each possible $H'$ does not decrease by this change, then the number of copies of $H$ does not decrease either. Moreover, if the number of copies of some $H'$ increases, then the number of copies of $H$ increases, which is a contradiction.

To show that the number of copies of $H'$ increases, assume first that $H'$ is connected. As $H$ is complete multipartite, this implies that $H'$ is a complete bipartite graph $K_{a,b}$ for some $a,b$ with $a+b\le |V(H)|$. We have $\binom{|A|}{a}\binom{|B|}{b}+\binom{|A|}{b}\binom{|B|}{a}$ copies of $H'$ between $A$ and $B$. It is easy to see that this number increases when we move a vertex from $A$ to $B$.

If $H'$ is disconnected, there may be multiple ways to embed it to the classes $A$ and $B$. However, for every such embedding with $a'$ and $b'$ vertices in $A$ and $B$, the same argument as above shows that the number of such embeddings increases when we move a vertex from $A$ to $B$, thus the number of copies of $H'$ increases.

\smallskip

{\bf Case 2:} For every pair of partition classes $A$ and $B$ in $G$, we have  $|A|< |V(H)||B|$.

\smallskip

Let us fix $\epsilon>0$ and choose $k_0$ such that $k_0-1>|V(H)|/\epsilon$. Now assume that $k \geq k_0$. Then the average size of the classes in $G$ is 
\[
\frac{n}{k-1}\leq \frac{n}{k_0-1}<\frac{\epsilon n}{|V(H)|}.
\] 
Therefore, the size of each class $X$ of $G$ satisfies
\[
\frac{1}{(k-1)|V(H)|} n \leq |X| \leq \frac{|V(H)|}{k-1} n < \epsilon n.
\]
The graph $G$ is not a Tur\'an graph, so it has classes $A$ and $B$ such that $|A|>|B|+1$. 
Let us move a vertex from $A$ to $B$ to create a new complete $(k-1)$-partite graph $G'$. Let us count the number of copies of $H$ destroyed and created when moving a vertex from $A$ to $B$.  It is well-known and easy to see that $G'$ has more edges than $G$.

Those copies of $H$ in $G$ that do not have any edge from $A$ to $B$ remain in the graph.
For each edge $uv$ between $A$ and $B$ consider the copies of $H$ where $u$ and $v$ are the only vertices of $H$ in $A \cup B$. Observe that their number does not depend which vertices $u$ and $v$ we choose from $A$ and $B$. We can greedily pick a vertex from each of the other $|V(H)|-2$ distinct classes to extend $uv$ to a unique such copy of $H$. At each step we can choose from at least $n-|V(H)|\epsilon n$ vertices. Therefore, the number of such copies of $H$ is at least
\[
((1-|V(H)| \epsilon)n)^{|V(H)|-2}.
\]
As there are more edges between $A$ and $B$ in $G'$ than in $G$, we have created at least
$((1-|V(H)| \epsilon)n)^{|V(H)|-2}$ new copies of $H$.

Now consider a copy of $H$ that has at least two edges between $A$ and $B$.
Such copy of $H$ has $p\geq 3$ vertices in $A\cup B$. These $p$ vertices can be extended to a copy of $H$ in at most $n^{|V(H)|-p}$ ways. Now pick an arbitrary bipartite subgraph $H'$ of $H$ with $p\ge 3$ vertices. We claim that the number of copies of $H'$ in $A\cup B$ decreases by at most $\epsilon c n^{p-2}$ when we move a vertex $v$ from $A$ to $B$ for some constant $c$ that depends only on $H$. 

Indeed, consider a proper $2$-coloring of $H'$ with $a$ vertices of color red and $b$ vertices of color blue. We may suppose that $v$ is in our copy of $H'$ otherwise $H$ is unchanged.
Then we have to pick $a-1$ vertices from $A$ and $b$ vertices from $B$ (or vice versa) to form a copy of $H'$. Therefore, we start with $\binom{|A|}{a-1}\binom{|B|}{b}+\binom{|A|}{b}\binom{|B|}{a-1}$ copies of $H'$ and, after moving $v$, we end up with $\binom{|A|-1}{a-1}\binom{|B|+1}{b}+\binom{|A|-1}{b}\binom{|B|+1}{a-1}$ copies of $H'$. Recall that only $|A|$ and $|B|$ depend on $n$ and $\epsilon$. Simple expansion gives
\[
\binom{|A|-1}{a-1}\binom{|B|+1}{b}=\frac{(|A|-a+1)(|B|+1)}{|A|(|B|+1-b)}\binom{|A|}{a-1}\binom{|B|}{b}.
\]
Therefore, the difference between the first terms of these counts of $H'$ is
\begin{align*}
& \frac{(|A|-a+1)(|B|+1)-|A|(|B|+1-b)}{|A|(|B|+1-b)}\binom{|A|}{a-1}\binom{|B|}{b}
\\
&\le \frac{|A|b-|B|(a-1)}{|A|(|B|+1-b)} |A|^{a-1}|B|^b \le 
\frac{|A|b-|B|(a-1)}{|B|+1-b}(\epsilon n)^{a+b-2}=c_0 (\epsilon n)^{a+b-2},\end{align*}
where $c_0\le |V(H)|^2(b+1)$. A similar bound can be obtained for the difference of the second terms, proving our claim.

%It is easy to see that the difference between the two above values is $c (\epsilon n)^{a+b-2}$.  

This shows that the number of copies of $H$ that have more than one edge between $A$ and $B$ decreases by at most $c\epsilon n^{|V(H)|-2}$, thus the total number of copies of $H$ increases, a contradiction.
%There are two classes so that the ratio of their sizes is arbitrarily large, moving a vertex from the big one to the other increases every possible subgraph of $H$ there...
\end{proof}

When $H$ is a star $S_t$, Cutler, Nir and Radcliffe \cite{cnr} state that numerical evidence suggests for small $t$ (i.e., large $k$) that $S_t$ is $k$-Tur\'an-good. We can confirm this statement for every multipartite $H$. Indeed, Theorem~\ref{gpl2} and Lemma~\ref{larg} together imply the following theorem.

\begin{theorem}\label{multi}
For every complete multipartite graph $H$ there is an integer $k_0$ such that if $k\ge k_0$, then $H$ is $k$-Tur\'an-good.
\end{theorem}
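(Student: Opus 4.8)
The plan is to combine the two results cited immediately before the statement, namely Theorem~\ref{gpl2} and Lemma~\ref{larg}, each of which handles a different range of the argument. Lemma~\ref{larg} already gives the desired inequality $\cN(H,G)\le \cN(H,T_{k-1}(n))$ for \emph{every} complete $(k-1)$-partite graph $G$, provided $k\ge k_0$ and $n\ge n_0$. So the only missing ingredient is that, when maximizing copies of $H$ over all $n$-vertex $K_k$-free graphs, the extremal graph may be assumed to be complete $(k-1)$-partite. This is exactly what Theorem~\ref{gpl2} supplies for complete multipartite $H$.

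Concretely, I would first fix a complete multipartite $H$ and let $k_0$ and $n_0$ be the constants guaranteed by Lemma~\ref{larg} for this $H$. For $k\ge k_0$ and $n$ sufficiently large, let $G^\ast$ be an $n$-vertex $K_k$-free graph attaining $\ex(n,H,K_k)=\cN(H,G^\ast)$. By Theorem~\ref{gpl2}, for $n$ large enough we may take $G^\ast$ to be a complete $(k-1)$-partite graph. I would then apply Lemma~\ref{larg} to $G^\ast$ to conclude
\[
\ex(n,H,K_k)=\cN(H,G^\ast)\le \cN(H,T_{k-1}(n)).
\]
Since $T_{k-1}(n)$ is itself an $n$-vertex $K_k$-free graph, the reverse inequality $\cN(H,T_{k-1}(n))\le \ex(n,H,K_k)$ holds trivially, giving equality and hence that $H$ is $k$-Tur\'an-good for all $k\ge k_0$.

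There is essentially no hard step here: the theorem is a clean corollary, and the real work was already done in establishing Lemma~\ref{larg}. The one point that requires a moment's care is the interaction of the ``$n$ large enough'' thresholds: Theorem~\ref{gpl2} holds for $n$ large relative to $k$ and $H$, while Lemma~\ref{larg} requires $n\ge n_0$, so I would simply take $n$ larger than the maximum of the two thresholds (both of which depend only on $H$ and $k$, and $k$ is fixed once chosen $\ge k_0$). A second bookkeeping remark is that Theorem~\ref{gpl2} only guarantees the optimizer is complete $(k-1)$-partite, not that it is balanced; this is precisely why Lemma~\ref{larg} is needed to rule out unbalanced complete multipartite graphs and pin down the Tur\'an graph as the maximizer. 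Thus the proof is a one-line assembly of the two preceding results.
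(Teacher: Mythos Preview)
Your proposal is correct and is exactly the approach the paper takes: the paper states the theorem as an immediate consequence of Theorem~\ref{gpl2} and Lemma~\ref{larg}, which is precisely the combination you describe. Your additional remarks about reconciling the ``$n$ large enough'' thresholds and about why Lemma~\ref{larg} is needed (to exclude unbalanced complete $(k-1)$-partite graphs) are accurate and make explicit what the paper leaves implicit.
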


We believe that Theorem~\ref{multi} should hold for any graph $H$. See Conjecture~\ref{conj3} in Section~\ref{rema} for details.

\section{Forbidding non-cliques}\label{tura}

We begin this section with a simple proposition.

\begin{prop}\label{p3-critial}
If $F$ is a graph with chromatic number $\chi(F)=k \geq 4$ and a color-critical edge, then $P_3$ is $F$-Tur\'an-good.
\end{prop}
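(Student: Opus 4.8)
The plan is to reduce the problem to the already-available Theorem~\ref{MQ}, which asserts that $K_r$ is $F$-Tur\'an-good whenever $F$ has chromatic number $k > r$ and a color-critical edge. Since $P_3 = K_{1,2}$ is not itself a clique, I cannot invoke that result directly, but I can count copies of $P_3$ in any graph $G$ purely in terms of its degree sequence: a copy of $P_3$ is determined by choosing a center vertex $v$ and an unordered pair of neighbors, so
\[
\cN(P_3,G)=\sum_{v\in V(G)}\binom{d(v)}{2}.
\]
The strategy is therefore to show that, among all $n$-vertex $F$-free graphs, this degree-based sum is maximized by the Tur\'an graph $T_{k-1}(n)$, and to leverage the exact edge-maximality result (Theorem~\ref{crit-thm}) to control the total degree.

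First I would recall that, because $F$ has a color-critical edge, Theorem~\ref{crit-thm} guarantees that for $n$ large the unique $F$-extremal graph is $T_{k-1}(n)$, so in particular every $F$-free graph $G$ satisfies $\sum_v d(v)=2e(G)\le 2e(T_{k-1}(n))$, with equality only for the Tur\'an graph. The key convexity observation is that $\sum_v \binom{d(v)}{2}$ is a convex function of the degree sequence, so to maximize it one wants the degrees as large and as concentrated as possible, subject to the $F$-free constraint. The Tur\'an graph is nearly regular with all degrees close to $(1-\tfrac{1}{k-1})n$; I would argue that any $F$-free graph with substantially more $P_3$'s would need either more edges (impossible by Theorem~\ref{crit-thm}) or a degree sequence far from regular while retaining the same edge count, and that the latter cannot help because pushing degrees apart is constrained by the fact that no vertex in an $F$-free graph can have degree close to $n$ in the presence of many other high-degree vertices (else one finds a copy of $F$). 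Here is where the hypothesis $k\ge 4$ enters: for $\chi(F)\ge 4$ the Tur\'an graph $T_{k-1}(n)$ has at least three parts, so it genuinely contains copies of $P_3$ and the degrees are bounded away from $n$, giving the convexity argument room to work.

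The cleanest route to formalize the above is the spectral/stability approach already used in Proposition~\ref{path-asym}: one shows $\ex(n,P_3,F)=(1+o(1))\cN(P_3,T_{k-1}(n))$ asymptotically, and then upgrades this to an exact statement via a stability argument. Concretely, I would take a near-extremal $F$-free graph $G$, use the asymptotic count to show its degree sequence must be $o(n)$-close to that of $T_{k-1}(n)$, and then run a local-switching/symmetrization argument: if $G$ is not complete $(k-1)$-partite, removing a ``bad'' edge or moving a vertex between near-classes strictly increases $\sum_v\binom{d(v)}{2}$ while keeping $G$ free of $F$, using the color-critical edge to certify $F$-freeness after the modification. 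This forces $G$ to be complete $(k-1)$-partite, after which the count $\sum_v\binom{d(v)}{2}$ is an explicit convex function of the class sizes, minimized-deviation (i.e.\ maximized) exactly when the classes are balanced, i.e.\ $G=T_{k-1}(n)$.

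\textbf{Main obstacle.} The delicate step is the passage from the asymptotic bound to the exact identity, i.e.\ the stability argument certifying that the extremal $G$ is genuinely complete $(k-1)$-partite and balanced rather than merely close to it. The convexity of $\sum_v\binom{d(v)}{2}$ makes the ``balanced is best'' conclusion easy once we know $G$ is complete multipartite, and the edge-bound from Theorem~\ref{crit-thm} controls the total degree; the real work is ruling out sparse $F$-free configurations that trade a slight edge deficit for a more skewed, higher-$\binom{d}{2}$-sum degree sequence. I expect the color-critical edge hypothesis and the resulting exact Simonovits structure to be exactly what closes this gap, so the proof may simply quote Theorem~\ref{crit-thm} together with the convexity identity rather than redo a full stability analysis.
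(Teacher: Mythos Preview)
Your plan has a genuine gap at its core: the convexity of $x\mapsto\binom{x}{2}$ works against you, not for you. With a fixed edge count $\sum_v d(v)=2e(G)$, Jensen's inequality says the nearly-regular Tur\'an graph \emph{minimizes} $\sum_v\binom{d(v)}{2}$, it does not maximize it. So ``quote Theorem~\ref{crit-thm} together with the convexity identity'' cannot work: the edge bound alone puts no useful ceiling on $\cN(P_3,G)$, because an $F$-free graph with slightly fewer edges but a more skewed degree sequence could in principle have more cherries. Your proposal acknowledges this possibility but never actually rules it out; the stability/symmetrization step you sketch is all of the content, and you have not supplied it. (Your explanation of why $k\ge 4$ is needed is also off: $T_2(n)$ is full of copies of $P_3$, so the issue is not that the Tur\'an graph lacks them.)

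The paper's argument avoids the convexity trap by bringing in a second extremal bound: not just edges, but triangles. Write $p(G)$ for the number of \emph{induced} $P_3$'s and double-count pairs (edge, extra vertex) to get
\[
2p(G)+3\cN(K_3,G)\le |E(G)|(n-2),
\]
with equality for $T_{k-1}(n)$. Since every $P_3$ is either induced or sits inside a triangle (three per triangle), $\cN(P_3,G)=p(G)+3\cN(K_3,G)=\bigl(p(G)+\tfrac32\cN(K_3,G)\bigr)+\tfrac32\cN(K_3,G)$. The first bracket is at most $\tfrac12|E(G)|(n-2)$ by the inequality above, and the second is at most $\tfrac32\cN(K_3,T_{k-1}(n))$ by Theorem~\ref{MQ}, which applies precisely because $k\ge 4$ makes $K_3$ an $F$-Tur\'an-good graph. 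Both pieces are then capped by their Tur\'an values, and summing gives $\cN(P_3,G)\le\cN(P_3,T_{k-1}(n))$. The hypothesis $k\ge 4$ is needed exactly so that Theorem~\ref{MQ} controls triangles; this is the missing ingredient in your approach.
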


\begin{proof} 
	Fix an $F$-free $n$-vertex graph $G$ and let $p(G)$ be the number of induced copies of $P_3$. 
	Let us count the number of pairs $(uv,w)$ where $uv$ is an edge in $G$ and $w$ is a vertex in $G$ that is distinct from $u$ and $v$. Clearly, there are $|E(G)|(n-2)$ such pairs. On the other hand, on any three vertices there is at most one triangle or one induced $P_3$. Moreover, each triangle consists of three such pairs $(uv,w)$ and every induced $P_3$ consists of two such pairs $(uv,w)$. Thus
	\begin{equation}\label{pair-count}
	2p(G)+3\mathcal{N}(K_3,G) \leq |E(G)|(n-2).
	\end{equation}
	For the graph $G=T_{k-1}(n)$ we have equality in (\ref{pair-count}). 
	By Theorems \ref{crit-thm} and \ref{MQ}, we have that $|E(G)| \leq |E(T_{k-1}(n))|$ and
	$\mathcal{N}(K_3,G) \leq \mathcal{N}(K_3,T_{k-1}(n))$.
	Counting  copies of $P_3$ in $G$ gives
	\begin{align*}
	\ex(n,P_3,F) = \mathcal{N}(P_3,G)&=p(G)+3 \mathcal{N}(K_3,G)=(p(G)+\frac{3}{2}\mathcal{N}(K_3,G))+\frac{3}{2}\mathcal{N}(K_3,G) \\
	&\le (p(G)+\frac{3}{2}\mathcal{N}(K_3,G))+\frac{3}{2}\mathcal{N}(K_3,T_{k-1}(n))\\
	&\le \frac{1}{2}|E(G)|(n-2)+\frac{3}{2}\mathcal{N}(K_3,T_{k-1}(n))\\
	&\le \frac{1}{2}|E(T_{k-1}(n))|(n-2)+\frac{3}{2}\mathcal{N}(K_3,T_{k-1}(n))=\mathcal{N}(P_3,T_{k-1}(n)).
	\end{align*}
\end{proof}

We believe that
the condition on the chromatic number of $F$ can be reduced to $3$ in Proposition~\ref{p3-critial}.

\subsection{Forbidding a book}

Recall that a {\it book} $B_k$ is the graph of $k$ triangles all sharing exactly one common edge. Note that book $B_2$ is simply the graph resulting from removing an edge from $K_4$. We will show that both $C_4$ and $P_4$ are $B_2$-Tur\'an-good.

Let $\overline{M_k}$ be the complement of the graph of $k$ independent edges, i.e., $\overline{M_k}$ is a clique on $2k$ vertices with the edges of a perfect matching removed.
Let $\overline{M_k}^+$ be the graph resulting from adding an edge to $\overline{M_k}$, i.e, 
$\overline{M_k}^+$ is the graph of a clique on $2k$ vertices with all but one of the edges of a perfect matching removed. Note that $\overline{M_k}$ and $\overline{M_k}^+$ differ by a single edge and that $\chi(\overline{M_k})=k$ and $\chi(\overline{M_k}^+)=k+1$, i.e., the graph $\overline{M_k}^+$ has a color-critical edge.
Also note that when $k=2$, we have that $\overline{M_k}$ is the cycle $C_4$ and $\overline{M_k}^+$ is the book $B_2$  (i.e., $K_4$ minus an edge).

\begin{lemma}\label{matching-complement}
Let $H$ be a $2k$-vertex graph consisting of two vertex-disjoint copies of $K_k$ joined together with edges arbitrarily. If $\overline{M_k}$ has the maximum number of copies of $H$ among all $2k$-vertex $\overline{M_k}^+$-free graphs, then $H$ is $\overline{M_k}^+$-Tur\'an-good. In particular, $\overline{M_k}$ is $\overline{M_k}^+$-Tur\'an-good.
\end{lemma}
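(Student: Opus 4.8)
The plan is to follow the same strategy used in \tref{turgood}, but now forbidding the color-critical-edge graph $\overline{M_k}^+$ rather than a clique. The key structural observation is that $\overline{M_k}^+$ has chromatic number $k+1$ and a color-critical edge, so by \tref{crit-thm} the unique extremal graph for $\overline{M_k}^+$ on $n$ vertices (for $n$ large) is the Tur\'an graph $T_k(n)$. Moreover, since $H$ has chromatic number $k$ (it is built from two $K_k$'s), copies of $H$ do live in $T_k(n)$, so the problem is nondegenerate.

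First I would let $G$ be an $n$-vertex $\overline{M_k}^+$-free graph with the maximum number of copies of $H$, and count copies of $H$ by the location of the two $K_k$'s that form $H$. The plan is to express $\cN(H,G)$ as a sum over pairs of disjoint $k$-cliques $(K,K')$ in $G$, weighted by the number of ways the prescribed bipartite joining pattern between the two parts of $H$ can be realized by the edges of $G$ between $K$ and $K'$. The crucial local claim — this is where the color-critical edge hypothesis does the work — is that any two disjoint $k$-cliques $K,K'$ in an $\overline{M_k}^+$-free graph span \emph{at most} the edge set of $\overline{M_k}$ between them; that is, the bipartite graph between $K$ and $K'$ omits at least a perfect matching. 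Indeed, if the bipartite graph between $K$ and $K'$ contained a set of edges with no isolated-pair obstruction, one would find a copy of $\overline{M_k}^+$ (all edges within each part are present since $K,K'$ are cliques, so one only needs $k^2-k+1$ of the crossing edges forming the right pattern). I would make this precise with a Hall/K\"onig-type argument on the bipartite complement exactly as in the proof of \tref{turgood}: the forbidden configuration $\overline{M_k}^+$ corresponds to the crossing non-edges failing to contain a perfect matching.

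Given this local bound, the number of ways to realize the joining pattern of $H$ between any disjoint pair of $k$-cliques in $G$ is at most the number of ways to realize it between two parts of a $\overline{M_k}$-pattern, and in the Tur\'an graph $T_k(n)$ every pair of $k$-cliques taken from $k$ distinct classes crosses in exactly the $\overline{M_k}$ pattern (complete bipartite minus a perfect matching). By the hypothesis of the lemma, $\overline{M_k}$ realizes the maximum number of copies of $H$ among all $2k$-vertex $\overline{M_k}^+$-free graphs, so the per-clique-pair count is maximized by the Tur\'an configuration. Combining this with the fact that the number of (ordered) disjoint pairs of $k$-cliques in a $K_{k+1}$-free graph — hence in an $\overline{M_k}^+$-free graph, since $K_{k+1} \supseteq \overline{M_k}^+$ is not quite the relation we want; instead I would bound the number of $k$-cliques directly — is maximized by $T_k(n)$ via \tref{zykov-theorem}, I would conclude that $\cN(H,G)\le \cN(H,T_k(n))$, giving $\overline{M_k}^+$-Tur\'an-goodness. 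The final clause follows by taking $H=\overline{M_k}$ itself, whose two $K_k$'s are joined by a complete bipartite graph minus a perfect matching; the hypothesis is then trivially satisfied since $\overline{M_k}$ is its own optimizer on $2k$ vertices.

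\textbf{Main obstacle.}
The hard part will be the bookkeeping that separates the global count into ``number of disjoint $k$-clique pairs'' times ``per-pair joining count,'' because a single copy of $H$ may be counted multiple times (once for each way its two cliques can be labeled as $K$ and $K'$, and possibly with automorphisms of $H$), and these overcounting factors must be shown to be the same constant in $G$ and in $T_k(n)$ so they cancel. One must also handle the possibility that the two $k$-cliques of a copy of $H$ in $G$ are not vertex-disjoint in the way prescribed, or that $G$ contains $k$-cliques not coming from a clean bipartite split; verifying that the local structure theorem (crossing pattern $\subseteq \overline{M_k}$) genuinely forces the per-pair count to be dominated by the $\overline{M_k}$ value — rather than merely bounding edges — is the delicate point, and it is exactly here that the lemma's hypothesis on $\overline{M_k}$ being the optimizer is invoked to avoid having to re-prove the extremal count from scratch.
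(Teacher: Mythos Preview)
Your overall strategy matches the paper's: decompose the count of $H$ over pairs of disjoint copies of $K_k$, bound the per-pair contribution by the hypothesis, and bound the number of such pairs by a clique-counting result. Two concrete corrections are needed.

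First, your ``crucial local claim'' is false. It is \emph{not} true that the crossing bipartite graph between two disjoint $K_k$'s in an $\overline{M_k}^+$-free graph must be a subgraph of $K_{k,k}$ minus a perfect matching. For $k=3$, take two disjoint triangles together with all nine crossing edges except two incident to a single vertex $v$; the complement of this $6$-vertex graph is a two-edge star, which is not contained in a matching of size~$2$, so the graph is $\overline{M_3}^+$-free --- yet the two missing crossing edges share a vertex and hence do not contain a perfect matching. The Hall argument from Theorem~\ref{turgood} therefore does not transplant here. What the paper actually uses is weaker and immediate: the $2k$-vertex subgraph $G[K\cup K']$ is itself $\overline{M_k}^+$-free, so by the \emph{hypothesis of the lemma} it carries at most $\cN(H,\overline{M_k})$ copies of $H$; and in $T_k(n)$ every pair of disjoint $K_k$'s spans exactly $\overline{M_k}$, giving equality. (For the ``in particular'' clause one uses the direct observation that two disjoint $K_k$'s can span at most one copy of $\overline{M_k}$, since adding any further edge produces $\overline{M_k}^+$.) You do eventually invoke the hypothesis, so simply delete the Hall detour.

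Second, Zykov's theorem does not apply: $\overline{M_k}^+$ has $2k$ vertices, so for $k\ge 2$ an $\overline{M_k}^+$-free graph can contain $K_{k+1}$, and you correctly flag this. The replacement the paper uses is Theorem~\ref{MQ} (Ma--Qiu): since $\overline{M_k}^+$ has chromatic number $k+1$ and a color-critical edge, $T_k(n)$ maximizes the number of copies of $K_k$ among $n$-vertex $\overline{M_k}^+$-free graphs for $n$ large, and likewise $T_k(n-k)$ does so on $n-k$ vertices; combining the two bounds the number of pairs of disjoint $K_k$'s. Once you substitute Theorem~\ref{MQ} for Theorem~\ref{zykov-theorem} and drop the false structural claim, your argument coincides with the paper's.
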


\begin{proof} We can count the copies of $\overline{M_k}$ by counting the number of ways to choose a pair of disjoint copies of $K_k$ and then counting the number of copies of $\overline{M_k}$ spanned by these two copies of $K_k$. We show that each of these quantities is maximized among $n$-vertex $\overline{M_k}^+$-free graphs by the Tur\'an graph $T_{k}(n)$.

By Theorem~\ref{MQ}, for $n$ large enough, the Tur\'an graphs $T_k(n)$ and $T_k(n-k)$ contain the maximum number of copies of $K_k$ among all $n$-vertex and $(n-k)$-vertex $\overline{M_k}^+$-free graphs. Therefore, $T_k(n)$ maximizes the number of pairs of disjoint copies of $K_k$.
 In an $\overline{M_k}^+$-free graph, if two disjoint copies of $K_k$ span a copy of $\overline{M_k}$, then there can be no further edges between them as otherwise we have a copy of $\overline{M_k}^+$. Thus, any two disjoint copies of $K_k$ span at most one copy of $\overline{M_k}$. Observe that in $T_k(n)$ any pair of disjoint copies of $K_k$ span exactly one copy of $\overline{M_k}$. As the number of copies of $H$ on $2k$-vertices is maximized by $\overline{M_k}$ we have that
 the number of copies of $H$ is maximized in $T_k(n)$.
\end{proof}

When $k=4$ the graphs $P_4$ and $C_4$ are both candidates for the graph $H$ in Lemma~\ref{matching-complement}. This gives the following proposition.

\begin{proposition}\label{neew}
 The cycle $C_4$ and path $P_4$ are $B_2$-Tur\'an-good.
\end{proposition}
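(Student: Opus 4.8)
The plan is to invoke Lemma~\ref{matching-complement} with $k=2$, where $\overline{M_2}=C_4$ and $\overline{M_2}^+=B_2$ as already observed. The first step is to check that each of $C_4$ and $P_4$ has the structure required of $H$ in the lemma, namely a four-vertex graph built from two vertex-disjoint copies of $K_2$ by adding cross-edges arbitrarily. Writing $C_4$ on vertices $a,b,c,d$ with edges $ab,bc,cd,da$, the disjoint edges $ab$ and $cd$ together with the cross-edges $bc$ and $da$ realize $C_4$ in this form; writing $P_4$ on vertices $a,b,c,d$ with edges $ab,bc,cd$, the disjoint edges $ab$ and $cd$ together with the single cross-edge $bc$ do the same. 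So both are admissible choices of $H$.

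It then remains to verify the hypothesis of the lemma: that $\overline{M_2}=C_4$ maximizes $\cN(H,\cdot)$ over all four-vertex $B_2$-free graphs, for $H\in\{C_4,P_4\}$. Since both the host graph and the pattern have four vertices, this is a finite check over the short list of four-vertex $B_2$-free graphs. Because $B_2$ is $K_4$ minus an edge and thus has five edges, any four-vertex graph with at least five edges contains $B_2$; hence a four-vertex graph is $B_2$-free exactly when it has at most four edges. The candidates are therefore $C_4$, the paw (a triangle with a pendant edge), and sparser graphs such as $P_4$, $K_{1,3}$, and $K_3\cup K_1$.

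For $H=C_4$ the verification is immediate: adding any edge to $C_4$ on the same four vertices creates a chord, hence a copy of $B_2$, so $C_4$ is the only four-vertex $B_2$-free graph containing a $4$-cycle and $\cN(C_4,C_4)=1$ is the unique maximum. This is precisely the ``in particular'' clause of Lemma~\ref{matching-complement}. For $H=P_4$ I would count Hamiltonian paths in each candidate: $C_4$ gives $\cN(P_4,C_4)=4$, obtained by deleting any one of its four edges, whereas the paw gives only $2$ and every sparser candidate gives at most $1$. Thus $C_4$ again attains the strict maximum.

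With the hypothesis confirmed for both patterns, Lemma~\ref{matching-complement} yields that $C_4$ and $P_4$ are $B_2$-Tur\'an-good. There is no substantial obstacle here: the only genuine content is recognizing that the maximization in the lemma is over graphs on exactly four vertices, so the verification reduces to a finite computation, after which one carries out the small case analysis counting copies of $C_4$ and $P_4$. The single point requiring care is to enumerate \emph{all} four-vertex $B_2$-free graphs, so that the maximality of $C_4$ is checked against every competitor rather than only the obvious ones.
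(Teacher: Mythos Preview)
Your proof is correct and follows essentially the same approach as the paper: invoke Lemma~\ref{matching-complement} with $k=2$ (the paper's ``$k=4$'' is a typo), after checking that $C_4$ and $P_4$ are of the required form. Your explicit verification that $C_4$ maximizes $\cN(H,\cdot)$ among four-vertex $B_2$-free graphs for $H\in\{C_4,P_4\}$ fills in a step the paper leaves to the reader, and your case analysis there is accurate.
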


We remark that one can also obtain that $C_4$ is $B_2$-Tur\'an-good from a result of Pippenger and Golumbic \cite{pg}. They showed that $T_2(n)$ contains the largest number of \emph{induced} copies of $C_4$ among $n$-vertex graphs. As every copy of a $C_4$ is induced in a $B_2$-free graph, this implies that $C_4$ is $B_2$-Tur\'an-good.

\subsection{Forbidding odd cycles}
 Gerbner, Gy\H ori, Methuku and Vizer \cite{ggmv} counted paths and even cycles when forbidding an odd cycle. In particular, they
 proved that for
 any $k \geq 1$ and $l \geq 2$,
 \begin{align*}
 &\ex(n,P_{l},C_{2k+1}) =  (1+o(1))\left(\frac{n}{2}\right)^l = (1+o(1)) \mathcal{N}(P_l,T_2(n)) \\
 &\ex(n,C_{2l},C_{2k+1}) =  (1+o(1))\frac{1}{2l} \left(\frac{n^2}{4}\right)^l= (1+o(1)) \mathcal{N}(C_{2l},T_2(n)).
 \end{align*}

In this subsection we show that both $P_4$ and $C_4$ are $C_5$-Tur\'an-good, i.e, the results above are exact for $k=2$ and $P_4$ and $C_4$, respectively. In the case of $P_4$ we also obtain a stability result.

\begin{theorem}\label{p4-c5}
The path $P_4$ is $C_5$-Tur\'an-good. Moreover, if $G$ is a $C_5$-free graph on $n$ vertices and $G$ has $\alpha$ edges contained in triangles, then the number of copies of $P_4$ in $G$ is at most
\[
\cN(P_4,T_2(n))-(1+o(1))\alpha \frac{n^2}{12}.
\]

\end{theorem}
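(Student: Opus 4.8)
The plan is to count copies of $P_4$ in a $C_5$-free graph $G$ on $n$ vertices by first setting up a counting identity that expresses $\cN(P_4,G)$ in terms of degree-based quantities, and then to exploit the crucial structural fact that in a $C_5$-free graph the triangles are highly constrained. First I would recall the standard way to count paths on four vertices: a copy of $P_4$ is determined by choosing a ``central'' edge $xy$ together with a neighbor $u$ of $x$ and a neighbor $v$ of $y$, with $u,v,x,y$ all distinct and $uv \notin E(G)$ (so that we do not instead have a $C_4$ or a triangle-based configuration). More precisely, I would write $\cN(P_4,G)$ as a sum over edges $xy$ of roughly $(d(x)-1)(d(y)-1)$, and then correct for the overcounted configurations where the two endpoints coincide or are adjacent. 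The leading term $\sum_{xy \in E} (d(x)-1)(d(y)-1)$ is a familiar quantity maximized (subject to the edge constraint) by the balanced complete bipartite graph, which is exactly $T_2(n)$.

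The next step is to import the asymptotic tools already available. Since $C_5$ has chromatic number $3$, by the Erd\H{o}s--Stone--Simonovits theorem $G$ has at most $(1+o(1))n^2/4$ edges, and moreover the spectral/degree-distribution results cited in the proof of Proposition~\ref{path-asym} tell us that any near-extremal $C_5$-free graph has degrees concentrated near $n/2$. This lets me control the main term: $\cN(P_4,G) \le (1+o(1))\cN(P_4,T_2(n))$ at the asymptotic level (which recovers the $\ggmv$ result), but the point of the theorem is the sharper statement with the error term governed by $\alpha$, the number of edges lying in triangles. So the heart of the argument is to show that each edge contained in a triangle forces a genuine \emph{deficit} of order $n^2$ in the $P_4$-count relative to $T_2(n)$.

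The key structural input — and the step I expect to be the main obstacle — is that $C_5$-freeness severely limits how triangles interact with the rest of the graph. In a $C_5$-free graph, if $xy$ is an edge in a triangle $xyz$, then the neighborhoods of $x$ and $y$ outside the triangle must be nearly disjoint from many potential $P_4$-extensions, because a $P_4$ with central edge $xy$ whose endpoints reach back toward $z$ (or toward a common neighbor) would close up into a $C_5$. Concretely, I would argue that an edge $xy$ in a triangle cannot serve as the central edge of nearly as many copies of $P_4$ as a ``triangle-free'' edge does in $T_2(n)$: the forbidden $C_5$ eliminates a positive fraction — of order $n^2$ — of the extensions that would otherwise be available. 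Summing this per-edge deficit over all $\alpha$ such edges, and checking that these deficits do not substantially double-count one another, should yield the claimed loss of $(1+o(1))\alpha n^2/12$. The delicate part is pinning down the constant $1/12$ and ensuring the lower-order interactions (pairs of triangle-edges sharing vertices, and the distinction between $P_4$'s counted via their central edge versus via their end edges) are absorbed into the $(1+o(1))$ factor; here I would rely on the degree-concentration near $n/2$ to replace all the relevant degrees by $n/2$ up to $o(n)$ error, so that each triangle-edge contributes a clean $\Theta(n^2)$ shortfall. Finally, setting $\alpha \ge 1$ shows strict deficit whenever $G$ contains a triangle, and since $T_2(n)$ is triangle-free this proves it is the unique maximizer, giving that $P_4$ is $C_5$-Tur\'an-good.
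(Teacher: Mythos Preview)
Your proposal has the right instinct---attach the deficit to triangle-edges---but the mechanism you describe does not produce it, and the exact (not merely asymptotic) first statement is not reachable by your route.

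The central gap is this: counting $P_4$'s only through their \emph{central} edge gives
\[
\cN(P_4,G)=\sum_{xy\in E(G)}\bigl[(d(x)-1)(d(y)-1)-\mathrm{codeg}(x,y)\bigr],
\]
and the presence of a triangle at $xy$ only enters through the $\mathrm{codeg}$ term, which is $O(n)$ per edge, not $\Theta(n^2)$. Your claim that ``the forbidden $C_5$ eliminates a positive fraction---of order $n^2$---of the extensions'' at the central-edge level is unjustified: $C_5$-freeness does not directly shrink $(d(x)-1)(d(y)-1)$ for a triangle-edge. (Incidentally, requiring $uv\notin E(G)$ is wrong; $P_4$ is counted as a subgraph, not induced.) Moreover, the assertion that $\sum_{xy}(d(x)-1)(d(y)-1)$ is maximized over $C_5$-free graphs by $T_2(n)$ is not standard and would itself need proof; and your appeal to degree concentration near $n/2$ applies only to near-extremal graphs, whereas the theorem is stated for \emph{every} $C_5$-free $G$.

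The paper's proof avoids all of this by counting, for each edge $uv$, the quantity $q(u,v)=$ number of copies of $P_4$ containing $uv$ in \emph{any} of its three positions, so that $\sum_{uv}q(u,v)=3\cN(P_4,G)$. Writing $A,B,C,D$ for the common neighbors of $u,v$, the exclusive neighbors of $u$, of $v$, and the non-neighbors in $G':=G-u-v$, one obtains in the triangle-free case $A=\emptyset$ the exact inequality $q(u,v)\le 2|E(G')|+|B||C|\le 3\lfloor n/2-1\rfloor\lceil n/2-1\rceil$, the last step using the color-critical edge theorem applied to $G'$. This, together with $|E(G)|\le |E(T_2(n))|$, already gives the sharp first statement. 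When $|A|\ge 1$, $C_5$-freeness forces structural restrictions (no $B$--$C$ edges, etc.) that drop $q(u,v)$ down to $(1+o(1))n^2/2$, a shortfall of $(1+o(1))n^2/4$; summing over the $\alpha$ triangle-edges and dividing by $3$ yields the constant $1/12$. The point you were missing is that the deficit lives in the count of $P_4$'s through $uv$ in \emph{all} positions, not just the central one.
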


\begin{proof}
Let $G$ be an $n$-vertex $C_5$-free graph.
We will show that every edge of $G$ is contained in at most $3\lfloor n/2-1\rfloor \lceil n/2-1 \rceil$ copies of $P_4$. As the number of edges is maximized in the Tur\'an graph $T_2(n)$ and in the Tur\'an graph every edge is contained in $3\lfloor n/2-1\rfloor \lceil n/2-1 \rceil$ copies of $P_4$, this will show that $P_4$ is $C_5$-Tur\'an-good.
In order to prove the second part of the theorem we will examine the number of copies of $P_4$ containing a fixed edge $e$ where $e$ is contained in a triangle in $G$.

Consider an edge $uv$ and let $G'$ be obtained from $G$ by deleting $u$ and $v$. 
As $G'$ is a $C_5$-free graph on $n-2$ vertices and $n-2$ is large enough, Theorem~\ref{crit-thm} implies that $G'$ satisfies 
\[
|E(G')| \leq |E(T_2(n-2))| = \lfloor n/2-1\rfloor \lceil n/2-1\rceil.
\]
Now partition $V(G')$ into a set $A$ of vertices adjacent to both $u$ and $v$, a set $B$ of vertices adjacent to $u$ but not $v$, a set $C$ of vertices adjacent to $v$ but not $u$, and a set $D$ of the remaining vertices (not adjacent to $u$ nor $v$). As $G$ is $C_5$-free, no vertex in $V(G')$ is adjacent to a vertex in $A\cup B$ and a distinct vertex in $A\cup C$. 

Observe that if $|A|\ge 1$, then there is no edge between $B$ and $C$.
If $|A|\ge 2$, then there is no edge between $A$ and $B\cup C$.
If $|A|\ge 3$, then there is no edge in $A$.

Consider two vertices $x$ and $y$ of $G'$ (necessarily distinct from $u$ and $v$). 
Let $f(x,y)$ denote the number of copies of $P_4$ in $G$ containing the edge $uv$ and vertices $x$ and $y$. If $x,y\in A$ and $xy$ is an edge, then $f(x,y)=6$
and if $xy$ is not an edge, then $f(x,y)=2$. If $x\in A$ and $y\in B\cup C$ and $xy$ is an edge, then $f(x,y)=4$ and if $xy$ is not an edge, then $f(x,y)=1$. If $x\in B$ and $y\in C$ and $xy$ is an edge, then $f(x,y)=3$ and if $xy$ is not an edge, then $f(x,y)=1$. If $x,y\in B$ and $xy$ is an edge, then $f(x,y)=2$ and if $xy$ is not an edge, then $f(x,y)=0$. The same argument holds when $x,y \in C$. If $x\in D$ is not adjacent to $y$, then $f(x,y)=0$. If $x$ is adjacent to $y$ and $y\in A$, then $f(x,y)=2$ and if $y\in B\cup C$ then $f(x,y)=1$ and if $y\in D$, then $f(x,y)=0$.
Let 
\[
q(u,v):=\sum_{x,y\in V(G')} f(x,y),
\]
i.e., $q(u,v)$ is the number of copies of $P_4$ containing the edge $uv$.
We determine an upper-bound on $q(u,v)$ in four cases based on the size of $A$. 

\smallskip

\textbf{Case 1:} $A=\emptyset$, i.e., $uv$ is not contained in any triangles. 

\smallskip

For every pair $x,y$ of vertices, $f(x,y)$ depends on which of the three sets $B$, $C$ and $D$ they belong to and whether $x$ and $y$ are adjacent in $G'$. Observe that when $x \in B$ and $y \in C$, then $f(x,y) \leq 3$ if $xy$ is an edge and $f(x,y) \leq 1$ otherwise. In all other cases $f(x,y) \leq 2$ if $xy$ is an edge and $f(x,y) \leq 0$ otherwise. 
Therefore, $q(u,v)\le 2|E(G')|+|C||B|$. Both terms of this sum are maximized by the Tur\'an graph, $T_2(n-2)$, so $q(u,v) \leq 3\lfloor n/2-1\rfloor \lceil n/2-1 \rceil$.

\smallskip

\textbf{Case 2:} $A=\{w\}$. 

\smallskip

Then $\sum_{y\in V(G')}f(w,y)\le 4(n-3)$. If $x\neq w\neq y$, then $f(x,y)\le 2$. Indeed, if $x$ and $y$ are adjacent, then it is impossible that one of them is in $B$ and the other is in $C$. Moreover, if $x$ and $y$ are not adjacent, then $f(x,y)=0$. Therefore, we have $q(u,v)\le 4(n-3)+2|E(G')|\le (1+o(1))n^2/2$.

\smallskip

\textbf{Case 3:} $A=\{w,w'\}$.

\smallskip

If $\{x,y\}=\{w,w'\}$, then $f(x,y)\le 6$. By the same reasoning as in Case 2, if $|\{x,y\}\cap\{w,w'\}|=1$, then $f(x,y)\le 4$, and if $|\{x,y\}\cap\{w,w'\}|=0$, then $f(x,y)\le 2$. Moreover, in this latter case, if $x$ and $y$ are not adjacent, then $f(x,y)=0$. Therefore, we obtain $q(u,v)\le 6+8(n-3)+2|E(G')|\le (1+o(1))n^2/2$. 

\smallskip

\textbf{Case 4:} $|A|=m\ge 3$.

\smallskip

Then we know $f(x,y)\le 2$ if $x,y\in A$, since $xy$ is not an edge of $G'$. Furthermore, 
the only other case when $f(x,y)\ge 2$ is when $x\in A$ and $y\in D$ are adjacent and we have $f(x,y) =2$. Observe that $y\in D$ can be adjacent to at most one $x\in A$. Therefore this situation can occur at most once for each element of $D$, i.e., at most $n-m-2$ total times. 
All other pairs $x,y$ have $f(x,y) \leq 1$ and therefore $q(u,v) \le \binom{n}{2}+\binom{m}{2}+n-m-2$.

\smallskip

Let us call a subgraph of $G$ a \textit{large book} if it consists of the book spanned by the edge $uv$ and all the $m\ge 3$ common neighbors $w_1,\dots, w_m$ of $u$ and $v$. Observe that each edge of the form $uw_i$ or $vw_i$ in $G$ is contained in the single triangle $uvw_i$ as otherwise we can form a $C_5$ with $w_j$ for some $j \neq i$ (as $m \geq 3$), a contradiction.

This implies that large books are pairwise edge-disjoint.
Therefore, we can calculate the sum of $q(u,v)$ for all edges $uv$ contained in a large book by summing them for each large book. In a large book $H$ with $m+2$ vertices, we have $2m$ edges each contained in exactly one triangle (thus we can use Case 2) and one edge contained in exactly $m$ triangles (where we use Case 4). Therefore,
\[
\sum_{uv\in E(H)} q(u,v)\le 2m(1+o(1))n^2/2+\binom{n}{2}+\binom{m}{2}+n-m-2\le (2m+1)(1+o(1))n^2/2 + \binom{m}{2}
\]
This implies that for edges in large books, the average of $q(u,v)$ is  
\[
\frac{1}{2m+1}\sum_{uv\in E(H)} q(u,v)\le (1+o(1))n^2/2+\frac{1}{2m+1}\binom{m}{2} = (1+o(1))n^2/2.
\]

For the at most $\lfloor n^2/4\rfloor-\alpha$ edges not in triangles, we have $q(u,v)\le 3\lfloor n/2-1\rfloor \lceil n/2-1 \rceil$ by Case 1. 
For edges in triangles but not in large books, we have $q(u,v)\le (1+o(1))n^2/2$ by Cases 2 and 3. Therefore, 
\begin{align*}
  3\cN(P_4,G) & =\sum_{uv\in E(G)} q(u,v) \\
  & \le  3\left\lfloor {n}/{2}-1\right\rfloor\lceil {n}/{2}-1\rceil (\lfloor n^2/4\rfloor-\alpha)+(1+o(1))\alpha n^2/2= \\
  &=3\cN(P_4,T_2(n))-(1+o(1))\alpha n^2/4,   
\end{align*}
completing the proof.
\end{proof}

%Another observation is that if $C_3$ is forbidden instead of $C_5$, then the above proof also gives the same bound, but in an easier way, as $A$ is always empty, thus Case 2.1. is the only case.

\begin{lemma}\label{even-path-to-even-cycle}
Fix a graph $F$ and
let $G_0$ be a complete bipartite graph with $\mathcal{N}(P_{2k},G_0) = \ex(n,P_{2k},F)$. Then $G_0$ satisfies $\mathcal{N}(C_{2k},G_0) = \ex(n,C_{2k},F)$.
\end{lemma}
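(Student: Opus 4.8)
The plan is to relate the counts of $P_{2k}$ and $C_{2k}$ through a containment double-count that is merely an inequality for general graphs but becomes an equality for complete bipartite graphs. First I would establish, for every $n$-vertex graph $G$, the bound
\[
2k\cdot \mathcal{N}(C_{2k},G) \le \mathcal{N}(P_{2k},G).
\]
To see this, count pairs $(P,C)$ where $P$ is a copy of $P_{2k}$, $C$ is a copy of $C_{2k}$, and $P\subseteq C$. Each $C_{2k}$ contains exactly $2k$ copies of $P_{2k}$ (delete any one of its $2k$ edges to obtain a Hamilton path of the cycle), so the number of such pairs equals $2k\cdot\mathcal{N}(C_{2k},G)$. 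On the other hand, a copy of $P_{2k}$ spans exactly $2k$ vertices, so the only $C_{2k}$ that can contain it is the one obtained by adding the edge between its two endpoints; hence each $P_{2k}$ lies in at most one such pair. Comparing the two counts yields the inequality, and moreover equality holds precisely when every copy of $P_{2k}$ in $G$ has adjacent endpoints.

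Next I would observe that $G_0$, being complete bipartite, satisfies this equality condition. Indeed, a copy of $P_{2k}$ has $2k-1$ edges, so its two endpoints lie on opposite sides of the bipartition of $G_0$ and are therefore adjacent. Consequently every $P_{2k}$ in $G_0$ closes into a unique $C_{2k}$, giving
\[
2k\cdot\mathcal{N}(C_{2k},G_0) = \mathcal{N}(P_{2k},G_0).
\]

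Finally I would chain these facts together. For an arbitrary $n$-vertex $F$-free graph $G$,
\[
2k\cdot\mathcal{N}(C_{2k},G) \le \mathcal{N}(P_{2k},G) \le \ex(n,P_{2k},F) = \mathcal{N}(P_{2k},G_0) = 2k\cdot\mathcal{N}(C_{2k},G_0),
\]
where the middle equality is exactly the hypothesis that $G_0$ is extremal for $P_{2k}$. Dividing by $2k$ gives $\mathcal{N}(C_{2k},G)\le\mathcal{N}(C_{2k},G_0)$ for every $F$-free $G$, which is precisely the assertion $\mathcal{N}(C_{2k},G_0)=\ex(n,C_{2k},F)$.

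I do not expect a serious obstacle: the argument rests only on the elementary containment double-count and the parity observation that the endpoints of a path with an odd number of edges sit on opposite sides of a bipartition. The one point meriting care is the claim that a $P_{2k}$ extends to at most one $C_{2k}$, which uses that both graphs have exactly $2k$ vertices, leaving no room for additional vertices and forcing the closing edge to be the unique edge between the two endpoints.
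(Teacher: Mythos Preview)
Your proof is correct and follows essentially the same approach as the paper: both use the containment double-count to get $2k\cdot\mathcal{N}(C_{2k},G)\le\mathcal{N}(P_{2k},G)$ for every graph, observe that equality holds in the complete bipartite $G_0$ because every $P_{2k}$ there closes to a $C_{2k}$, and then chain through the extremality hypothesis on $G_0$. Your write-up is in fact slightly more explicit than the paper's in justifying the ``at most one $C_{2k}$ per $P_{2k}$'' step and the parity reason the endpoints are adjacent in $G_0$.
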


\begin{proof}
Let $G$ be an $n$-vertex $F$-free graph with $\ex(n,C_{2k},F)$ copies of $C_{2k}$. Observe that $\cN(P_{2k},G) \leq \ex(n,P_{2k},F)$. 
Every copy of $C_{2k}$ contains $2k$ copies of $P_{2k}$ and each copy of $P_{2k}$ is contained in at most one $C_{2k}$. Thus,
 \[
 2k \cdot \cN(C_{2k},G)\le \mathcal{N}(P_{2k},G).
 \]
 Note that copies of $P_{2k}$ not contained in a $C_{2k}$ are not counted here.
 As $G_0$ is a complete bipartite graph, every copy of $P_{2k}$ in $G_0$ is contained in a $C_{2k}$. Therefore,
 \[
 2k \cdot \cN(C_{2k},G_0)=\cN(P_{2k},G_0).
 \]
 Thus, 
 \begin{align*}
 \ex(n,C_{2k},F)& =\cN(C_{2k},G)\le \cN(P_{2k},G)/2k\le \ex(n,P_{2k},F)/2k \\ & = \cN(P_{2k},G_0)/2k=\cN(C_{2k},G_0).
 \end{align*}
\end{proof}

Theorem~\ref{p4-c5} and Lemma~\ref{even-path-to-even-cycle} imply the following corollary.

\begin{cor}\label{c4-c5}
    Let $F$ be a 3-chromatic graph. If $P_{2k}$ is $F$-Tur\'an-good, then $C_{2k}$ is $F$-Tur\'an-good. In particular, $C_4$ is $C_5$-Tur\'an-good.
\end{cor}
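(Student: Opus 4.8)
The plan is to combine Theorem~\ref{p4-c5} with Lemma~\ref{even-path-to-even-cycle} directly; the corollary is essentially a repackaging of the latter once the correct complete bipartite graph $G_0$ is identified.

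First I would observe that the hypothesis $\chi(F)=3$ forces the relevant Tur\'an graph to be $T_2(n)=K_{\lceil n/2\rceil,\lfloor n/2\rfloor}$, which is in particular a complete bipartite graph. This is exactly the structural feature required to invoke Lemma~\ref{even-path-to-even-cycle}. Unwinding the definition, the assumption that $P_{2k}$ is $F$-Tur\'an-good says precisely that $\ex(n,P_{2k},F)=\cN(P_{2k},T_2(n))$ for all $n$ large enough. Hence $G_0:=T_2(n)$ is a complete bipartite graph satisfying $\cN(P_{2k},G_0)=\ex(n,P_{2k},F)$, so the hypothesis of Lemma~\ref{even-path-to-even-cycle} is met. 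Applying the lemma immediately yields $\cN(C_{2k},T_2(n))=\ex(n,C_{2k},F)$, which is exactly the assertion that $C_{2k}$ is $F$-Tur\'an-good.

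For the ``in particular'' clause, I would invoke Theorem~\ref{p4-c5}, which states that $P_4$ is $C_5$-Tur\'an-good, and note that $C_5$ is $3$-chromatic and $4=2\cdot 2$; applying the general statement with $k=2$ then gives that $C_4$ is $C_5$-Tur\'an-good.

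I do not expect a genuine obstacle, since the real work lives in Theorem~\ref{p4-c5} and Lemma~\ref{even-path-to-even-cycle}. The one point demanding care is verifying that $T_2(n)$ legitimately plays the role of $G_0$: it must be both complete bipartite (guaranteed by $\chi(F)=3$) and a maximizer of the number of copies of $P_{2k}$ among $F$-free graphs (guaranteed by the $F$-Tur\'an-goodness of $P_{2k}$). I would also remark that $T_2(n)$ is $F$-free, because $F$ is $3$-chromatic while $T_2(n)$ is bipartite, so the equalities above relate genuine generalized Tur\'an numbers rather than the degenerate all-zero case excluded by the definition.
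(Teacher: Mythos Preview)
Your proposal is correct and matches the paper's approach exactly: the paper simply states that the corollary follows from Theorem~\ref{p4-c5} and Lemma~\ref{even-path-to-even-cycle}, and you have correctly spelled out how, by taking $G_0=T_2(n)$ (complete bipartite because $\chi(F)=3$, and a $P_{2k}$-maximizer by the $F$-Tur\'an-goodness hypothesis) in the lemma.
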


\subsection{Forbidding fans}

Until this point we only considered forbidden graphs that have a color-critical edge. By Theorem~\ref{crit-thm}, an extremal graph for a $k$-chromatic graph $F$ without a color-critical edge has more edges than $T_{k-1}(n)$. This suggests that there may not be graphs $H$ that are $F$-Tur\'an-good in this case. Proposition~\ref{noncrit} below shows that this is false in general.

For $k \geq 2$, the {\it $k$-fan} $F_k$ is the graph of $k$ triangles all sharing exactly one common vertex. Note that the fan $F_k$ does not contain a color-critical edge.
Erd\H os, F\"uredi, Gould and Gunderson \cite{efgg} determined the exact Tur\'an number of $F_2$ for $n$ large enough. 

\begin{theorem}[Erd\H os, F\"uredi, Gould and Gunderson \cite{efgg}]\label{fan-turan}
Let $F_2$ be the graph of two triangles sharing exactly one vertex. Then, for $n$ large enough, the unique extremal graph for $F_2$ is the graph resulting from adding a single edge to one class of the Tur\'an graph $T_2(n)$. Thus, for $n$ large enough,
\[
\ex(n,F_2)=\left\lfloor \frac{n^2}{4}\right\rfloor+1.
\]
\end{theorem}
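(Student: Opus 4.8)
The plan is to prove matching lower and upper bounds, with the structure of near-extremal graphs governed by a purely local condition on neighborhoods. For the lower bound I would take exactly the construction in the statement: start from the Tur\'an graph $T_2(n)$ with parts $X$ and $Y$ and add one edge $e=xy$ inside $X$. Since $T_2(n)$ is triangle-free, every triangle of the new graph must use $e$, so every triangle has the form $xyz$ with $z\in Y$. Any two such triangles share the whole edge $e$, hence two vertices; thus no two triangles meet in exactly one vertex, and the graph contains no $F_2$. This yields $\ex(n,F_2)\ge \lfloor n^2/4\rfloor+1$.

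For the upper bound the crucial observation is a structural lemma: if $G$ is $F_2$-free, then for every vertex $v$ the subgraph induced on $N(v)$ has matching number at most $1$. Indeed, two vertex-disjoint edges $ab$ and $cd$ inside $N(v)$ would produce triangles $vab$ and $vcd$ meeting only in $v$, i.e.\ a copy of $F_2$. A graph whose edges pairwise intersect is a triangle or a star, so $G[N(v)]$ is a subgraph of a triangle or of a star; in particular $e(G[N(v)])\le \max\{3,\deg(v)-1\}$. Note the asymmetry this exposes: a \emph{book} (many triangles sharing one common edge) is allowed, since its triangles pairwise share two vertices, while two vertex-disjoint link-edges in a single neighborhood are forbidden.

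From here I would run a stability argument. Because $F_2\supseteq K_3$ and $\chi(F_2)=3$, the Erd\H os--Stone--Simonovits theorem gives $\ex(n,F_2)=(1+o(1))n^2/4$, and combining this with the stability theorem of Erd\H os and Simonovits shows that any $F_2$-free graph with at least $n^2/4-o(n^2)$ edges can be made bipartite by deleting $o(n^2)$ edges. I would then fix an extremal $G$, choose a bipartition $(V_1,V_2)$ minimizing the number of edges inside the parts, and use the neighborhood lemma to bound the ``defect'': the internal edges and the missing cross-edges. Symmetrizing toward the complete bipartite graph, and using that a single edge can anchor an arbitrarily large book but a \emph{second} internal edge disjoint from it would force two disjoint link-edges in some neighborhood, I would argue that all but one pair of vertices are joined completely across the bipartition and that the internal edges collapse to a single edge in one part, pinning the extremal graph to $T_2(n)+e$.

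The main obstacle is precisely this final exact/uniqueness step. Stability only locates $G$ within $o(n^2)$ edges of $T_2(n)$, and upgrading this to the sharp count $\lfloor n^2/4\rfloor+1$ demands a careful local cleaning: one must show that every vertex is eventually complete to the opposite side, that no two internal edges can coexist without creating a bowtie, and that no trade-off—deleting a cross-edge to gain internal structure—ever beats the single-edge construction. Controlling all of these local exchanges simultaneously, and concluding that the optimum is \emph{exactly} $T_2(n)+e$ rather than merely within a constant of it, is where the large-$n$ hypothesis and the bulk of the effort are required.
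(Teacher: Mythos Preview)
The paper does not prove this theorem; it is quoted from Erd\H os, F\"uredi, Gould and Gunderson \cite{efgg} and then used as a black box in the proof of Proposition~\ref{noncrit}. So there is no ``paper's own proof'' to compare against.

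That said, your outline is a faithful sketch of how the original EFGG argument actually proceeds: the construction for the lower bound is exactly right, and the key structural observation---that in an $F_2$-free graph each neighborhood $G[N(v)]$ has matching number at most one, hence is contained in a triangle or a star---is precisely the lemma that drives the upper bound in \cite{efgg}. The remaining stability-plus-cleaning strategy you describe is also the route taken there. Your final paragraph correctly identifies where the real work lies: upgrading the $o(n^2)$ stability conclusion to the exact structure requires ruling out several local trade-offs (two internal edges sharing a vertex, a $K_4$, a vertex with few cross-neighbors, etc.), and this is indeed where the large-$n$ hypothesis is spent. As written, your proposal is a plan rather than a proof---the cleaning step is asserted but not executed---but the plan is sound and matches the literature.
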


We use Theorem~\ref{fan-turan} to show that $C_4$ is $F_2$-Tur\'an-good.

\begin{proposition}\label{noncrit}
 The cycle $C_4$ is $F_2$-Tur\'an-good.
\end{proposition}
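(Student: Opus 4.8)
The plan is to count copies of $C_4$ through their diagonals. For two vertices $x,y$ write $d(x,y)$ for the number of their common neighbours. A $4$-cycle is determined by an unordered pair of opposite (diagonal) vertices together with two of their common neighbours, and each $C_4$ arises in this way from either of its two diagonals, so
\[
\cN(C_4,G)=\frac12\sum_{\{x,y\}}\binom{d(x,y)}{2}.
\]
In $T_2(n)$ only pairs inside a single class have positive codegree, which gives $\cN(C_4,T_2(n))=\binom{\lceil n/2\rceil}{2}\binom{\lfloor n/2\rfloor}{2}$; a direct computation shows that adding the single extra edge of the extremal graph of Theorem~\ref{fan-turan} leaves this number unchanged, so the target value is attained but I must prove it is an upper bound for every $F_2$-free $G$.

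The structural input I would use is that $G$ is $F_2$-free exactly when, for every vertex $v$, the graph induced on $N(v)$ contains no two independent edges; such a graph is a star or a triangle. In particular any two triangles of $G$ that meet share an edge, so the triangles of $G$ fall into pairwise vertex-disjoint \emph{blocks}, each block being a book or a copy of $K_4$. I then split the copies of $C_4$ into those that are \emph{induced} (both diagonals are non-edges) and those that carry a chord. A chorded $C_4$ contains a copy of $B_2=K_4-e$ whose spine is an edge lying in two triangles, hence inside a single triangle block (or it sits inside a $K_4$). Summing $\binom{d(x,y)}{2}$ over the spine edges of the books, and handling the $K_4$-blocks separately, the vertex-disjointness of the blocks bounds the number of chorded $C_4$'s by $\binom{n}{2}+O(n)=O(n^2)$.

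For the induced copies I would invoke the theorem of Pippenger and Golumbic \cite{pg} already cited in the previous subsection: $T_2(n)$ has the maximum number of induced $C_4$'s among all $n$-vertex graphs, namely $\binom{\lceil n/2\rceil}{2}\binom{\lfloor n/2\rfloor}{2}$. Combined with the $O(n^2)$ bound on chorded copies this yields $\cN(C_4,G)\le\cN(C_4,T_2(n))+O(n^2)$, the correct asymptotics. The main obstacle is to remove this error term and obtain the exact inequality, and the difficulty is genuine rather than cosmetic: the graph $T_2(n)+e$ already attains the bound while containing $\binom{\lfloor n/2\rfloor}{2}$ chorded $C_4$'s, so the chorded copies must be compensated \emph{exactly} by the deficit they force in the induced count. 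I would obtain this compensation through a stability argument: using Theorem~\ref{fan-turan} to deduce that a near-optimal $G$ has close to $n^2/4$ edges and is therefore close to complete bipartite, and then arguing that each triangle block forces $G$ to omit edges of the ambient complete bipartite graph, every omitted edge destroying about $n^2/4$ induced copies of $C_4$ — far more than the $O(n^2)$ chorded copies it creates, once $n$ is large. Turning this quantitative trade-off into an exact inequality, and in particular controlling books whose size is unbounded, is the step I expect to require the most care.
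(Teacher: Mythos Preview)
Your route is genuinely different from the paper's, and the difference matters. The paper does not separate induced from chorded copies and never invokes Pippenger--Golumbic. Instead it fixes an edge $uv$, observes that any edge $ab$ of $G-\{u,v\}$ can be completed to at most one $C_4$ through $uv$ (two completions would force $\{u,v,a,b\}$ to span a $K_4$, and that case is handled separately by deleting all four vertices), and hence bounds the number of $C_4$'s through $uv$ by $|E(G-\{u,v\})|\le \lfloor (n-2)^2/4\rfloor+1$, with the ``$+1$'' eliminated by a short structural argument using Theorem~\ref{fan-turan}. Summing over edges of $G$ (either $|E(G)|\le \lfloor n^2/4\rfloor$, or $G$ is exactly $T_2(n)+e$) and dividing by $4$ gives the exact bound with no error term to remove.

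Your asymptotic argument is correct, and the block decomposition of the triangles in an $F_2$-free graph is sound. The gap is in the stability sketch. You write that ``each triangle block forces $G$ to omit edges of the ambient complete bipartite graph'', but the extremal graph $T_2(n)+e$ already refutes this: it contains a single book with about $n/2$ pages producing $\binom{\lfloor n/2\rfloor}{2}$ chorded $C_4$'s, yet \emph{no} cross-edge is omitted. What actually happens there is that the added in-class edge $e$ lies in no $C_4$ as an edge, so the total count is unchanged; the chorded copies are not compensated by omitted edges but by formerly-induced $C_4$'s that acquired a chord. To push your approach through exactly you would need to show, roughly, that in-class edges never create new $C_4$'s (only relabel induced ones as chorded) while any missing cross-edge costs at least $\lfloor (n-2)^2/4\rfloor$ copies, and then keep track of books whose spine straddles the partition. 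That is doable but is considerably more work than the paper's per-edge count, so you should expect the exact step to be substantially longer than what you have sketched.
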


\begin{proof} Let $n$ be large enough and $G$ be an $n$-vertex $F_2$-free graph. If $G$ has more than $\lfloor n^2/4\rfloor$ edges, then Theorem~\ref{fan-turan} gives the exact structure of $G$. In particular, $G$ has $\cN(C_4,T_2(n))$ copies of $C_4$ (observe that the edge added to the $T_2(n)$ is not in any copy of $C_4$).

Therefore, we may assume that $G$ has at most $\lfloor n^2/4\rfloor$ edges. Let $uv$ be an arbitrary edge of $G$. We claim that $uv$ is in at most $\lfloor (n-2)^2/4\rfloor$ copies of $C_4$, which will complete the proof as it implies $\cN(C_4,G)\le \frac{1}{4} \lfloor n^2/4\rfloor\lfloor (n-2)^2/4\rfloor=\cN(C_4,T_2(n))$.
We distinguish two cases.

\smallskip

{\bf Case 1:} $uv$ is not contained in a $K_4$.

\smallskip

Delete $u$ and $v$ from $G$ and let $G'$ be the resulting graph. Every edge of $G'$ forms at most one $C_4$ with $uv$ and we count each such $C_4$ exactly once this way. 
The number of edges in $G'$ is at most $\lfloor (n-2)^2/4\rfloor+1$ by Theorem~\ref{fan-turan}. We are done unless  $|E(G')| = \lfloor (n-2)^2/4\rfloor+1$ and every edge of $G'$ forms a $C_4$ with $uv$ in $G$. 
So, by Theorem~\ref{fan-turan}, we may assume that $G'$ is a $T_2(n-2)$ with classes $A$ and $B$ and an extra edge $xy$ in class $A$.
Without loss of generality, we may assume $xyuvx$ is a $C_4$ in $G$. Thus $vx$ and $uy$ are edges of $G$.

Now let $z$ be an arbitrary vertex of $B$. 
Observe that if $ux$ (or $vy$) is an edge of $G$, then we have a copy of $F_2$ spanned by the two triangles $xyzx$ and $uvxu$ (or $uvyu)$, a contradiction. On the other hand, the edges $xz$ and $yz$ are each in a $C_4$ with $uv$. This implies that $zu$ and $zv$ are both edges of $G$. But then the two triangles $uvzu$ and $xyzx$ span a copy of $F_2$, a contradiction.

\smallskip

{\bf Case 2:} $uv$ is contained in a $K_4$.

\smallskip

Let $u$, $v$, $x$ and $y$ be the vertices of a $K_4$. Delete these four vertices and let $G'$ be the resulting $F_2$-free graph on $n-4$ vertices.
 Observe that each vertex of $G'$ is adjacent to at most one of ${u,v,x,y}$ as otherwise we have an $F_2$ in $G$. Therefore, each edge of $G'$ forms at most one $C_4$ with $uv$.
By Theorem~\ref{fan-turan} we have $|E(G')| \leq \lfloor (n-4)^2/4\rfloor+1$.
Therefore, the number of copies of $C_4$ containing $uv$ is at most $2+\lfloor (n-4)^2/4\rfloor+1\le \lfloor (n-2)^2/4\rfloor$, completing the proof.
\end{proof}

\section{Concluding remarks}\label{rema}

Theorem \ref{turgood} and a weaker version of Proposition~\ref{p3-critial} previously appeared in the authors' first and second arXiv version of \cite{GP}, but were ultimately not included in the published version.

Gy\H ori, Pach and Simonovits \cite{gypl} also studied when the Tur\'an graph is the unique extremal graph. We have so far avoided this for simplicity. Let us say that a graph $H$ is \emph{strictly $F$-Tur\'an-good} for a $k$-chromatic graph $F$ if for every $n$ large enough, $T_{k-1}(n)$ is the unique $F$-free graph with $\ex(n,H,F)$ copies of $H$. By Theorem~\ref{MQ}, if $F$ does not have a color-critical edge, then there is no strictly $F$-Tur\'an-good graph. However, it is not hard to show that our results when $F$ has a color-critical edge hold for the strict version as well.

Let us conclude with several natural conjectures supported by the results in this paper.

\begin{conjecture}\label{paths-are-good}
For every pair of integers $l$ and $k$,
the path $P_l$ is $k$-Tur\'an-good.
\end{conjecture}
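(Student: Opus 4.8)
The plan is to upgrade the asymptotic equality of Proposition~\ref{path-asym} (in the case $F = K_k$) to an exact one through the standard stability-plus-cleanup framework. Fix $k$ and $l$, and let $G$ be an $n$-vertex $K_k$-free graph with $\cN(P_l, G) = \ex(n, P_l, K_k)$; since $T_{k-1}(n)$ is $K_k$-free, Proposition~\ref{path-asym} already gives $\cN(P_l, G) = (1+o(1))\cN(P_l, T_{k-1}(n))$.

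First I would extract structural stability from the spectral bound itself. The proof of Proposition~\ref{path-asym} shows $\cN(P_l, G) \le \tfrac12 \mu(A(G))^{l-1} n$ together with $\mu(A(G)) \le (1 - \tfrac{1}{k-1} + o(1)) n$ for $K_k$-free $G$. Asymptotic extremality therefore forces $\mu(A(G)) = (1 - \tfrac{1}{k-1} + o(1))n$, i.e. $G$ has near-maximal spectral radius among $K_k$-free graphs. Invoking the stability version of the Nikiforov/Babai--Guidulli spectral theorem then yields a partition $V(G) = V_1 \cup \cdots \cup V_{k-1}$ with each $|V_i| = (1 + o(1)) n/(k-1)$, at most $o(n^2)$ edges inside the parts, and at most $o(n^2)$ non-edges between distinct parts.

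The heart of the argument is the cleanup phase: promoting this approximate product structure to an exact complete $(k-1)$-partite graph, after which a balancing computation finishes the proof. For the balancing I would write $\cN(P_l, \cdot)$ on complete $(k-1)$-partite graphs as a symmetric polynomial in the part sizes $a_1, \ldots, a_{k-1}$ and show that the smoothing move $(a_i, a_j) \mapsto (a_i - 1, a_j + 1)$ with $a_i > a_j + 1$ strictly increases the count (as one checks directly already for $K_{a,b}$, where $\cN(P_4, K_{a,b}) = a(a-1)b(b-1)$ is increasing in $ab$), singling out $T_{k-1}(n)$. For the reduction to complete multipartite graphs I would argue vertex by vertex: in an extremal $G$ every vertex should be joined to all of $V(G) \setminus V_i$ and to none of its own part, and any deviation ought to be removable by relocating the vertex, or by cloning onto it the neighborhood of an optimally placed vertex of the same part --- an operation that preserves $K_k$-freeness because the near-balanced part sizes prevent a relocated or cloned vertex from completing a $K_k$.

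The main obstacle will be exactly this cleanup. For cliques the analogous step is immediate because Zykov symmetrization is monotone for $\cN(K_r, \cdot)$, and the clique-built graphs of Theorems~\ref{gpl} and~\ref{turgood} inherit this; but a copy of $P_l$ can thread through all $k-1$ parts and may traverse an in-part edge, so the number of paths through a single vertex does not decompose cleanly and symmetrization is not obviously monotone for $\cN(P_l, \cdot)$ --- paths may use two false twins simultaneously. Making a local move (relocating a vertex, deleting an in-part edge, or cloning) provably non-decreasing for the path count, while controlling the paths that reuse the modified region and ensuring the $o(n^2)$ individual corrections do not accumulate, is the crux and the reason the statement is only a conjecture despite the asymptotics being known. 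A secondary, purely computational difficulty is that the balancing optimization for paths does not reduce to the clique count of Theorem~\ref{zykov-theorem} and must be carried out on its own; unlike for general complete multipartite $H$ (where Lemma~\ref{larg} and the remark after Theorem~\ref{gpl2} show the optimum can be far from balanced), the symmetry of $P_l$ should make the balanced partition optimal for every $l$ and $k$.
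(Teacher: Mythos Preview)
The statement you are attempting is Conjecture~\ref{paths-are-good}; the paper does \emph{not} prove it. The only evidence the paper records is that the case $l=3$ follows from Theorem~\ref{gpl2} (and Proposition~\ref{p3-critial} for $k\ge 4$), and that Proposition~\ref{path-asym} gives the asymptotic equality for all $l,k$. There is therefore nothing to compare your proposal against.

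Your outline is a sensible plan of attack, and you have correctly located the genuine obstruction yourself: the cleanup step. Zykov-type symmetrization is monotone for $\cN(K_r,\cdot)$, which is what drives Theorems~\ref{zykov-theorem}, \ref{gpl} and \ref{turgood}, but no analogous monotone local move is known for $\cN(P_l,\cdot)$ once $l\ge 4$ and $k\ge 4$; a path can revisit a part and can use both of a pair of false twins, so replacing a vertex's neighbourhood by that of a better-placed vertex may destroy more paths than it creates. Until that step is supplied, the argument remains a heuristic, which is exactly why the paper states this as a conjecture rather than a theorem. Your secondary step (the balancing optimisation over complete $(k-1)$-partite graphs) is also not settled in the literature for general $l$ and $k$, though it is widely believed; note that Lemma~\ref{larg} only handles $k$ large in terms of $H$, and the paper's Theorem~\ref{gpl2} does not assert that the optimal multipartite graph is balanced.

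One small technical point: deducing structural stability from near-maximal spectral radius requires a spectral stability theorem for $K_k$-free graphs (Nikiforov has results in this direction), not merely the eigenvalue bound quoted in Proposition~\ref{path-asym}; you would need to cite or prove such a statement explicitly. Alternatively one could try to show directly that near-maximal $\cN(P_l,\cdot)$ forces near-maximal edge count and then invoke Erd\H os--Simonovits stability, but that implication is itself not obvious for $l\ge 3$.
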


Theorem~\ref{gpl} and Corollary~\ref{p3-critial} imply that the conjecture holds for $l=3$ and $k \geq 3$. Proposition~\ref{path-asym} shows that the conjecture holds asymptotically.

\begin{conjecture}\label{conj3}
For every graph $H$ there is an integer $k_0$ such that if $k\ge k_0$, then $H$ is $k$-Tur\'an-good.
\end{conjecture}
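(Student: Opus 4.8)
The plan is to run the same two-step strategy that already yields Theorem~\ref{multi}, and to isolate the one ingredient whose known proof needs $H$ to be complete multipartite. Theorem~\ref{multi} is obtained by combining Theorem~\ref{gpl2}---a $K_k$-free graph maximizing copies of a complete multipartite $H$ must be complete $(k-1)$-partite---with Lemma~\ref{larg}, which states that among complete $(k-1)$-partite $n$-vertex graphs the Tur\'an graph maximizes $\cN(H,G)$ once $k$ is large. The key observation is that Lemma~\ref{larg} is already proved for an \emph{arbitrary} graph $H$. Consequently the whole conjecture reduces to the following generalization of Theorem~\ref{gpl2}: for every graph $H$ there is $k_0$ so that for all $k\ge k_0$ and $n$ large, every $n$-vertex $K_k$-free graph with the maximum number of copies of $H$ is complete multipartite (and hence, being $K_k$-free, has at most $k-1$ parts). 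Granting this, Lemma~\ref{larg} upgrades complete $(k-1)$-partite to the Tur\'an graph, so $H$ is $k$-Tur\'an-good.

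First I would try to establish this structural statement by Zykov-type symmetrization, as in Theorem~\ref{zykov-theorem}. Let $G$ be extremal and suppose it is not complete multipartite, i.e.\ non-adjacency is not transitive: there are vertices $u,v,w$ with $uv,vw\notin E(G)$ but $uw\in E(G)$. The aim is a local move---cloning one vertex of a non-adjacent pair onto the other, that is, deleting its edges and making it a twin---that preserves $K_k$-freeness while not decreasing, and somewhere strictly increasing, the number of copies of $H$, contradicting extremality unless $G$ is already complete multipartite. That cloning a non-adjacent pair preserves $K_k$-freeness is immediate from the usual argument: a new $K_k$ through the cloned vertex would yield a $K_{k-1}$ in the common neighborhood, hence a $K_k$ through its twin. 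The entire difficulty lies in the monotonicity of the $H$-count.

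The hard part---and the reason the statement is only a conjecture---is precisely this monotonicity when $H$ is not complete. Splitting the copies of $H$ by their intersection with a non-adjacent pair $\{u,v\}$ gives those meeting neither vertex, those meeting exactly one, and those meeting both (necessarily mapping $u,v$ to a non-adjacent pair of $H$). When $H$ is complete the last class is empty and the clean Zykov exchange applies, which is what powers Theorem~\ref{zykov-theorem} and, with more bookkeeping, Theorem~\ref{gpl2}; for general $H$ this class is nonempty and the change in the $H$-count under a single clone is not sign-definite. I would try to defeat this by exploiting that $k$ is large: since many classes leave ample room to re-route copies of a fixed bounded graph $H$, one can pass to a weighted relaxation---optimizing the polynomial $H$-density over the simplex of class-weight distributions on a bounded number of types---and argue by convexity that the optimum sits at a complete multipartite configuration before rounding back to an integral extremal graph. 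A complementary route is a stability argument: use the moving-vertex estimates already developed in the proof of Lemma~\ref{larg} to show that every near-extremal $K_k$-free graph is within $o(n^2)$ edges of a complete $(k-1)$-partite graph, and then a cleaning step removes the remaining defects exactly. In all of these approaches the crux is the same, namely controlling the copies of $H$ running through a symmetrized non-adjacent pair, and this is exactly the step where the known proof of Theorem~\ref{gpl2} uses that $H$ is complete multipartite; I expect it to be the main obstacle.
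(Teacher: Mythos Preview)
The statement is a \emph{conjecture} in the paper; the paper gives no proof and explicitly leaves it open, noting only that Theorem~\ref{multi} settles the complete multipartite case. Your proposal is likewise not a proof but a diagnosis, and as a diagnosis it is accurate. Your central observation---that Lemma~\ref{larg} is already proved for an arbitrary graph $H$ (the paper's first paragraph of that proof reduces general $H$ to complete multipartite $H$)---is correct, and so the conjecture genuinely does reduce to extending Theorem~\ref{gpl2} from complete multipartite $H$ to arbitrary $H$. You also correctly isolate why Zykov symmetrization stalls: copies of $H$ using both vertices of a non-adjacent pair are the obstruction, and this class is nonempty exactly when $H$ has non-adjacent vertices.

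Where your write-up falls short of a proof is that the proposed workarounds are not arguments but hopes. The ``weighted relaxation plus convexity'' idea is unsupported: the $H$-homomorphism density, as a polynomial on the simplex of part-weights, has no reason to be concave for general $H$, and without concavity the optimum need not sit at a balanced or even complete multipartite point. The stability-plus-cleaning route is equally speculative: you would need a genuine stability theorem for $\cN(H,\cdot)$ among $K_k$-free graphs and then an exact cleaning step, neither of which you supply or cite. In short, your reduction matches what the paper implicitly suggests, and your identification of the obstacle is right, but nothing here closes the gap---which is consistent with the paper treating this as an open problem.
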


Theorem~\ref{multi} implies that the conjecture is true for $H$ a complete multipartite graph. As noted in the introduction, this conjecture cannot in general be extended to hold for small $k$. Observe that Conjecture~\ref{conj3} would imply that if we increase $k$, sooner or later every graph becomes $k$-Tur\'an-good. In some of our examples if a graph was $k$-Tur\'an-good, then it was also $(k+1)$-Tur\'an-good. 
We do not know if this behavior holds for every graph $H$.

\begin{conjecture} The path $P_k$ and the even cycle $C_{2k}$ are $C_{2l+1}$-Tur\'an-good.
\end{conjecture}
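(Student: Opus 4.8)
The plan is to prove the statement for paths first and deduce the even-cycle case from it. Since $C_{2l+1}$ is $3$-chromatic, the relevant Tur\'an graph is the balanced complete bipartite graph $T_2(n)$, and by Corollary~\ref{c4-c5} (applied with the $3$-chromatic graph $F=C_{2l+1}$) it suffices to show that every path $P_k$ is $C_{2l+1}$-Tur\'an-good; the even cycle $C_{2k}$ then inherits the property from $P_{2k}$. I would attack the path case by a stability argument in three stages: first show that a graph maximizing the number of copies of $P_k$ is structurally close to $T_2(n)$, then clean it up to an exactly bipartite graph, and finally optimize over complete bipartite graphs.

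For the stability stage I would combine the spectral bound used in Proposition~\ref{path-asym} with spectral stability. As in that proof, each copy of $P_k$ yields two walks of length $k-1$, so $2\cN(P_k,G)\le \mathbf{1}^{T}A(G)^{k-1}\mathbf{1}\le \mu(A(G))^{k-1}n$. Hence if $G$ is a $C_{2l+1}$-free graph with $\cN(P_k,G)=(1-o(1))\cN(P_k,T_2(n))$, then $\mu(A(G))\ge(\tfrac12-o(1))n$, while the Babai--Guidulli/Nikiforov bound used in Proposition~\ref{path-asym} gives $\mu(A(G))\le(\tfrac12+o(1))n$; thus $\mu(A(G))=(\tfrac12+o(1))n$. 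The spectral stability results of Nikiforov~\cite{nik} (or, once a matching lower bound on the edge count is extracted, the Erd\H os--Simonovits stability theorem for the color-critical graph $C_{2l+1}$, cf.\ \cite{sim}) then force a bipartition $V(G)=X\cup Y$ with $|X|,|Y|=(\tfrac12+o(1))n$ for which all but $o(n^2)$ of the pairs across $X$ and $Y$ are edges, and only $o(n^2)$ edges lie inside $X$ or inside $Y$.

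The cleaning stage is where the real work lies, and I expect it to be the main obstacle. Starting from an extremal graph $G$ and the near-bipartition above, I would argue by local moves that $G$ contains no edge inside a part and that the two parts are balanced. Concretely, for each within-part edge $e$ I would compare the copies of $P_k$ through $e$ against the copies created by deleting $e$ and completing the cross-pairs this frees up, aiming to show that this operation never decreases $\cN(P_k,\cdot)$ and strictly increases it unless $G$ is already balanced complete bipartite. The difficulty is that, unlike in the pure edge-maximization setting handled by Simonovits' method~\cite{sim}, a $C_{2l+1}$-free graph may contain many shorter odd cycles (triangles, $C_5$'s, and so on for $l\ge 2$), and such non-bipartite substructures can create copies of $P_k$ that fold back within a part. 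One must show that the number of paths destroyed by removing a within-part edge is of strictly lower order than the number created across the bipartition; the near-completeness of the cross-edges and the sparsity of within-part edges furnished by the stability stage should provide the slack, but making this precise for all $k$ and $l$ simultaneously requires careful bookkeeping of how a copy of $P_k$ can traverse the $o(n^2)$ exceptional edges. This is precisely the step that is routine for $K_2$ (Theorem~\ref{crit-thm}) and was handled by a dedicated case analysis for $P_4$ in Theorem~\ref{p4-c5}, and whose general form keeps the statement at the level of a conjecture.

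Finally, once $G$ is known to be a bipartite graph, completing all cross-edges creates no odd cycle and only increases the path count, so $G$ may be taken to be a complete bipartite graph $K_{a,n-a}$. The quantity $\cN(P_k,K_{a,n-a})$ is then an explicit polynomial in $a$ that, by a short convexity computation, is maximized at the balanced split $a=\lfloor n/2\rfloor$, yielding $\cN(P_k,G)\le\cN(P_k,T_2(n))$ and completing the proof for paths. The even-cycle case then follows from Corollary~\ref{c4-c5}, consistent with the already-established base cases $P_4$ and $C_4$ for $l=2$ in Theorem~\ref{p4-c5} and Corollary~\ref{c4-c5}.
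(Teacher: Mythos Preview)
The paper does not prove this statement; it is listed as an open conjecture in the concluding remarks, with only the base cases $l=1$ (via Theorem~\ref{gpl}), $P_3$ for all $l$ (via Proposition~\ref{p3-critial}), and $P_4,C_4$ for $l=2$ (via Theorem~\ref{p4-c5} and Corollary~\ref{c4-c5}) established, together with the asymptotic result of \cite{ggmv}. So there is no proof to compare against, and your proposal should be read as an attack on an open problem rather than a reconstruction of an existing argument.

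Your reduction from even cycles to paths through Corollary~\ref{c4-c5} is valid, and the final optimisation over complete bipartite graphs is routine. The spectral step is also essentially sound: from $\cN(P_k,G)\ge(1-o(1))\cN(P_k,T_2(n))$ one does get $\mu(A(G))=(\tfrac12+o(1))n$, and Nikiforov's spectral Erd\H os--Stone--Bollob\'as theorem has a stability companion that would place $G$ within edit distance $o(n^2)$ of $T_2(n)$. The genuine gap, which you yourself flag, is the cleaning stage. For $l\ge 2$ a $C_{2l+1}$-free graph may contain arbitrarily many triangles and shorter odd cycles, so the $o(n^2)$ ``bad'' edges inside the parts can be arranged into dense local configurations that carry $\Theta(n^{k-2})$ copies of $P_k$ each; your proposed move of deleting a within-part edge and ``completing the cross-pairs this frees up'' is not well-defined, since a single within-part edge need not obstruct any missing cross edge in a $C_{2l+1}$-free graph, and hence need not free up anything when removed. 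This is exactly why Theorem~\ref{p4-c5} required a bespoke edge-by-edge estimate $q(u,v)\le 3\lfloor n/2-1\rfloor\lceil n/2-1\rceil$ rather than a global perturbation argument, and no analogue of that estimate is known for general $k$ and $l$. Until this step is supplied, the argument remains a heuristic outline of a plausible strategy rather than a proof, consistent with the paper's decision to state the result as a conjecture.
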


 The asymptotic version of this statement was proved in \cite{ggmv}. By Theorem~\ref{gpl}, the conjecture holds when $l = 1$. In this paper, we proved that it holds for $P_3$ when $l \geq 1$ (Corollary~\ref{p3-critial})
 and for $P_4$ and $C_4$ when $l=2$ (Theorem~\ref{p4-c5} and Corollary~\ref{c4-c5}).

\end{document}